\numberwithin{equation}{section}
\def\beq{\begin{equation}}
\def\eeq{\end{equation}}
\newtheorem{prop}{Proposition}[section]
\newtheorem{proposition}[prop]{Proposition}
\newtheorem{theo}[prop]{Theorem}
\newtheorem{lemm}[prop]{Lemma}
\newcommand{\git}{/\!\!/}
\newtheorem{definition}[prop]{Definition}
\newtheorem{defiprop}[prop]{Definition-Proposition}
\newtheorem{defi-prop}[prop]{Definition-Proposition}
\newtheorem{defi-theo}[prop]{Definition-Theorem}
\renewcommand{\tilde}{\widetilde}
\title{A boundedness theorem for principal bundles on curves}
\author{Huai-Liang Chang, Shuai Guo, Jun Li, Wei-Ping Li, Yang Zhou}
\date{}
\begin{document}
\maketitle
\begin{abstract}
  Let $G$ be a reductive group acting on an affine scheme $V$. We study the set of principal
  $G$-bundles on a smooth projective curve $\mathscr C$ such that the associated
  $V$-bundle admits a section sending the generic point of $\mathscr C$ into the
  GIT stable locus $V^{\mathrm{s}}(\theta)$. We show that after fixing the degree
  of the line bundle induced by the character $\theta$, the set of such principal
  $G$-bundles is bounded.
  The statement of our theorem is made slightly more general so that we deduce from
  it the boundedness for $\epsilon$-stable quasimaps and $\Omega$-stable LG-quasimap.
\end{abstract}
\tableofcontents
% \addtocontents{toc}{\protect \setcounter{tocdepth}{1}}

\section{Introduction}
\label{sec:intro}

\subsection{Overview}
We work over the field $\mathbb C$ of complex numbers.

Let $\mathscr C$ be a connected smooth projective curve, $G$ be a reductive
group, $V$ be a finite-dimensional $G$-representation and $\theta \in \widehat{G}$ be a character of $G$ such
that the GIT stable  locus $V^{\mathrm{s}}(\theta)$ is non-empty.
A map from $\mathscr C$ into $[V/G]$ is by definition given by
a principal $G$-bundle $\mathscr P$ over $\mathscr C$,
together with a section of the associated
vector bundle
\[
  \mathscr P \times_{G} V = (\mathscr P \times V )/G \to \mathscr C.
\]
It is called a quasimap if the generic point of $\mathscr C$ is sent to the
stable locus $\mathscr P \times_{G} V^{\mathrm{s}}(\theta)$ \cite{ciocan2014stable}. We would like to
study the boundedness of the moduli of such quasimaps.
In the theory of quasimap it is not hard to bound the underlying curves
$\mathscr C$. So for simplicity let us fix $\mathscr C$.
Then it is easy to see that
once we also fix $\mathscr P$, the moduli of sections of $\mathscr P \times_{G} V$
is bounded. Thus, the key is to study the boundedness of the set of principal
bundles $\mathscr P$ which
admits such a section.

There are obvious topological invariants  to be fixed before achieving boundedness.
For any character $\chi \in \widehat{G}$, let $\mathbb C_{\chi}$ denote the $1$-dimensional
representation of $G$ on which the action is given by $\chi$.
Thus, given $\mathscr P$, $\mathscr P \times_{G} \mathbb C_{\theta}$ is a line
bundle on $\mathscr C$ and we denote its degree by $\deg_{\mathscr C}(\mathscr P \times_{G} \mathbb C_{\theta})$.
% Then $\mathscr P$ induces a
% group homomophism
% \[
%   \mathrm{deg}_{\mathscr P}: \widehat{G} \longrightarrow \mathbb Z,
%   \quad \chi \mapsto \deg_{\mathscr C}(\mathscr P \times_{G} \mathbb C_{\chi}),
% \]
% where $\deg_{\mathscr C}(\mathscr P \times_{G} \mathbb C_{\chi})$ denotes the
% degree of the line bundle $\mathscr P \times_{G} \mathbb C_{\chi}$ on
% $\mathscr C$.
The main result of the paper is the following.
\begin{theo}[Simplified version]
  \label{thm:simple}
  For any fixed integer $d$, 
  the set of principal $G$-bundles $\mathscr P$
  on $\mathscr C$ such that
  \begin{itemize}
  \item
    $\deg_{\mathscr C}(\mathscr P \times_{G} \mathbb C_{\theta}) = d$, and
  \item
    $\mathscr P \times_{G} V$ admits a section sending
    the generic point of $\mathscr C$ into $\mathscr P \times_{G} V^{\mathrm{s}}(\theta)$ 
  \end{itemize}
  is bounded.
\end{theo}
The main application of this paper is to complete the boundedness part in
\cite{mixed-fields-main} in the nonabelian case. Hence, we will
consider a slightly more complicated setup and prove the more general
Theorem~\ref{thm:main}. Nevertheless, the proofs are essentially the same.

In \cite{mixed-fields-main}, we introduced the $\Omega$-stability of
LG-quasimaps, generalizing the
construction of Mixed-Spin-P fields \cite{chang2019mixed}  and the
Gauged-Linear-Sigma-Model \cite{fan2017mathematical}.  We proved the
separatedness and properness of the moduli of
$\Omega$-stable LG-quasimaps in the abelian case.
The only missing part of \cite{mixed-fields-main} is the boundedness in the
nonabelian case, which is completed by applying Theorem~\ref{thm:main}.

Since stable quasimaps can be viewed as a special
case of stable LG-quasimaps \cite[Section~4.4]{mixed-fields-main},
as a corollary we also prove the boundedness
of stable quasimap.

In this paper, all
schemes are assumed to be locally of finite type over the complex numbers $\mathbb C$.
We refer the reader to \cite{mixed-fields-main} for the notion of LG-quasimaps and its
$\Omega$-stability, which is our major motivation. We will also recall what we
need in the following, for the sake of self-containedness of this paper.
In \cite{mixed-fields-main}, we have proved  that for a collection of stable LG-quasimaps of bounded
numerical data, the collection of their underlying curves is bounded. And by a
standard argument, to prove the boundedness for stable LG-quasimaps, it suffices to
prove that the family of underlying principal bundles is bounded. We have also
shown in \cite{mixed-fields-main} that the degrees of the line bundle $\mathscr P\times_{\Gamma}
\mathbb C_{\vartheta}$ on the irreducible components of $\mathscr C$ are bounded.
Hence the boundedness is reduced to the main theorem of this paper, which we now state.

Let $\Gamma$ be a reductive group acting on an affine scheme $V$. Let $\epsilon \in
\widehat{\Gamma}$ be a character of $\Gamma$ fitting into an exact sequence
\begin{equation}
  \label{eq:extension-of-groups}
  1 \to G \to \Gamma \overset{\epsilon}{\to} \mathbb G_m \to 1.
\end{equation}
Let $\vartheta \in \widehat\Gamma$ and $\theta = \vartheta|_{G}$.
Let $V^{\mathrm{s}}(\theta)$, $V^{\mathrm{ss}}(\theta)$, and $V^{\mathrm{un}}(\theta)$ be the
$\theta$-stable, semistable, and unstable loci, respectively.
We assume $V^{\mathrm{s}}(\theta)\ne \emptyset$, but do not require
$V^{\mathrm{s}}(\theta) = V^{\mathrm{ss}}(\theta)$ in this paper.

Let $\Sigma^{\mathscr C} \subset \mathscr C \to S$ be any family of genus-$g$
$n$-pointed twisted curves (c.f.~\cite[Section~4]{abramovich2002compactifying})
over a scheme $S$ of finite type over $\mathbb C$.
For any closed point $s\in S$, we denote by $\mathscr C_s$ the fiber over $s$.
\begin{theo}
  \label{thm:main}
  Let $d_1,d_2 \in \mathbb Z$, the set of principal $\Gamma$-bundles $\mathscr
  P_s$ on $\mathscr C_s$, for all $s\in S(\mathbb C)$, such that
  \begin{itemize}
  \item
    $\mathscr P_s
    \times_{\Gamma} V \to \mathscr C_s$ admits a section $\sigma$ sending
    the generic points of $\mathscr C_s$ into $\mathscr P_s \times_{\Gamma} V^{\mathrm{s}}(\theta)$, and
  \item
    for each irreducible component $\mathscr C^\prime \subset \mathscr C_{s}$, we have
    \[
      \deg(\mathscr P\times_{\Gamma} \mathbb C_{\vartheta}|_{\mathscr
        C^\prime})  = d_1 \quad \text{and} \quad
      \deg(\mathscr P\times_{\Gamma} \mathbb C_{\epsilon}|_{\mathscr
        C^\prime})  = d_2,
    \]
  \end{itemize}
  is bounded.
\end{theo}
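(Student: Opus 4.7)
The strategy is to reduce Theorem~\ref{thm:main} to the standard numerical criterion for boundedness of principal bundles on curves (due, in various forms, to Ramanathan, Faltings, and Behrend), and then to use the generically $\theta$-stable section $\sigma$ together with the Hilbert--Mumford criterion to supply the needed degree bounds.

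After stratifying $S$ and passing to normalizations, I would first reduce to the case of a smooth irreducible twisted curve $\mathscr{C}$ of genus at most $g$; gluing and Picard data at nodes and marked points are parametrized by finite-type schemes once the degrees $d_1, d_2$ are fixed. For such a $\mathscr{C}$, a family of principal $\Gamma$-bundles is bounded iff there is a uniform upper bound on $\deg(\mathscr{P}_P \times_P \mathbb{C}_\chi)$, independent of the bundle, as $P$ ranges over representatives of the conjugacy classes of parabolic subgroups of $\Gamma$, $\mathscr{P}_P \subset \mathscr{P}$ over parabolic reductions, and $\chi$ over a finite set of generators of the cone of dominant characters of $P$.

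To produce such a bound, fix a parabolic reduction $\mathscr{P}_P$ and a cocharacter $\lambda\colon \mathbb{G}_m \to Z(L)$ into the center of the Levi $L$ of $P$. This decomposes $V = \bigoplus_{w\in\mathbb{Z}} V_w$ into $\lambda$-weight spaces, and the filtration $V_{\geq a} := \bigoplus_{w \geq a} V_w$ is $P$-stable, inducing a filtration of the associated bundle $\mathscr{P}_P \times_P V$. The section $\sigma$ then acquires a well-defined generic $\lambda$-weight
\[
  w_0 \;=\; \min\{\,w\in\mathbb{Z}\ :\ \sigma \text{ lies generically in }\mathscr{P}_P \times_P V_{\geq w}\,\}.
\]
At the generic point $\eta$, since $\sigma(\eta)\in V^{\mathrm{s}}(\theta)$, the Hilbert--Mumford numerical criterion applied to $\lambda$ forces $\langle\theta,\lambda\rangle > 0$, with the pairing controlled in size by $w_0$. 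Globalising this inequality through a degree computation along the $\lambda$-weight filtration, one bounds $\deg(\mathscr{P}_P \times_P \mathbb{C}_\chi)$ from above by an explicit linear combination of $d_1 = \deg(\mathscr{P}\times_\Gamma \mathbb{C}_\vartheta)$, $d_2 = \deg(\mathscr{P}\times_\Gamma \mathbb{C}_\epsilon)$, and nonnegative divisorial correction terms supported on the locus where $\sigma$ leaves $V^{\mathrm{s}}(\theta)$ or jumps to a higher $\lambda$-weight.

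The hard part is controlling these correction terms. Since $\sigma$ is only generically in $V^{\mathrm{s}}(\theta)$, there can in principle be effective divisor contributions from the unstable locus of $\sigma$ and from the stacky points of $\mathscr{C}$; making the Hilbert--Mumford degree comparison precise requires bounding the $\theta$-degree of these divisors from above using the fixed $d_1$, $d_2$ together with the combinatorics of the filtration. Once this is done for a finite generating set of dominant characters over each conjugacy class of parabolics of $\Gamma$, the resulting uniform bounds yield boundedness of the underlying bundles, and the $\Gamma$-structure is recovered from $d_2$ and the exact sequence \eqref{eq:extension-of-groups} up to a bounded family, completing the proof.
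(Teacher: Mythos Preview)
Your outline has the right ingredients---reduce to smooth curves, invoke a numerical boundedness criterion for principal bundles, and feed it degree inequalities coming from the generically stable section via Hilbert--Mumford---but the step you flag as ``the hard part'' is genuinely a gap, and the missing idea is the one the paper's argument is organised around.

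You propose to bound $\deg(\mathscr P_P\times_P\mathbb C_\chi)$ uniformly over \emph{all} parabolic reductions $\mathscr P_P$. The paper instead works only with the \emph{canonical} (Harder--Narasimhan) reduction $\mathscr Q$ of each bundle, and this is not a cosmetic choice. The canonical reduction supplies two pieces of positivity that an arbitrary reduction does not: (i) the associated Levi bundle $\mathscr Q\times_P L$ is semistable, and (ii) $\deg_{\mathscr Q}(\chi)>0$ for every nontrivial character of $P$ that is a nonnegative combination of simple roots. Both are used in an essential way. Property (i) is exactly what makes the section argument give a degree inequality: once $\sigma$ projects to a nonzero section of $\mathscr Q\times_P W$ for some irreducible $L$-representation $W$, semistability of $\mathscr Q\times_P L$ (via Ramanan--Ramanathan) forces $\deg(\mathscr Q\times_P\bigwedge^{\mathrm{top}}W)\ge 0$. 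For an arbitrary reduction the Levi bundle is not semistable, and a nonzero section of a higher-rank associated bundle yields no degree bound---this is precisely where your ``globalisation through a degree computation along the $\lambda$-weight filtration'' breaks down. Property (ii) contributes further lower bounds on $\deg_{\mathscr Q}$, along the projections of simple roots to $\Delta_L^\perp$, which have no analogue in your scheme.

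It is also worth noting that Hilbert--Mumford enters the paper's proof quite differently from how you use it. You apply it pointwise at $\eta$ to a chosen $\lambda$ and then try to integrate; the paper never does this. Instead it first collects all characters $\chi\in\widehat P_{\mathbb R}$ for which $\deg_{\mathscr Q}(\chi)$ is bounded below (from the fixed $d_1,d_2$, from property (ii), and from the section via property (i)), and then invokes Hilbert--Mumford only once, in a convexity argument: if the convex hull of these characters did \emph{not} contain $0$ in its interior, a separating functional would produce a one-parameter subgroup of $G$ destabilising $\sigma(\eta)$. Finiteness of $\deg_{\mathscr Q}$ then follows from the elementary fact that $\{d\in\mathrm{Hom}(\widehat P,\mathbb Z)\mid d(\chi)\ge\mathcal B\ \forall\chi\in K\}$ is finite once $0$ lies in the interior of the convex hull of $K$. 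No ``divisorial correction terms'' ever appear. I would rebuild your argument around the canonical reduction rather than arbitrary ones.
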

See Definition~\ref{def:boundedness-G-bundles} and the paragraph below it for the precise meaning
of boundedness and explanation of some notation.

\medskip

\subsection{An example}
We illustrate our strategy by proving the theorem in the simplest nonabelian
case as a motivating example.

Let $\Gamma = GL_2 \times \mathbb C^*$ and $V = \mathbb C^{2\times 4}$ be the
space of $2 \times 4$ complex matrices. Let $GL_2$ act by left multiplication and
$\mathbb C^*$ act trivially. Let $\epsilon$ be the second projection, and
$\vartheta$ be the first projection followed by the determinant function. Thus
an LG-quasimap is simply a quasimap to the Grassmannian $G(2,4) =
V\git_{\theta} G$. Namely, given $\pi: \mathscr C \to S$ and $s\in
S(\mathbb C)$ as above, an LG-quasimap with domain $\mathscr C_s$ is equivalent
to a vector bundle $\mathscr E$ of rank $2$ on $\mathscr C_s$, together with a
section
\[
  \sigma = (\sigma_1 ,\ldots, \sigma_4) \in H^0(\mathscr C_s, \mathscr E^{\oplus
  4}),
\]
such that the set
\begin{equation}
  \label{eq:base-locus}
  \{q \in \mathscr C_{s} \mid \dim (\mathrm{span}(\sigma_1(q) ,\ldots, \sigma_4(q)))
  < 2\}
\end{equation}
is finite and away from the nodes and markings. For simplicity we fix $s$ and
assume $\mathscr C_s$ is a smooth projective curve.
The degree of such a quasimap is the first Chern class of $\mathscr E$.
We will show the boundedness of quasimaps from $\mathscr C_{s}$ to $G(2,4)$ of a fixed degree.

One might be tempted to use the well-known fact that the space of regular maps from
$\mathscr C_{s}$ into $G(2, 4)$ of a fixed degree is bounded.
Each quasimap does extend to regular map, since the base points are assumed to
be smooth points of $\mathscr C_{s}$.
However, two different quasimaps could induce the same regular map.
For example, take $\mathscr C_s = \mathbb P^1$ with homogeneous coordinates
$(x,y)$, $\mathscr E_{s} = \mathcal O(1)
\oplus \mathcal O(1)$, and take
\[
  \sigma_1(t) = (x - t y, x), \sigma_2(t) = (x, x+ ty), \sigma_3(t) =
  \sigma_4(t) = 0,\quad t \in \mathbb C^*.
\]
It is easy to see that for different $t$, they are different quasimaps.
However, the induced regular maps are the same constant map.

Thus, instead of bounding the induced regular maps, we will bound the underlying
vector bundles $\mathscr E_{s}$, and then the boundedness of quasimaps follow easily.

To bound such vector
bundles of a fixed degree $d$, we divide them into two cases. The first is when
$\mathscr E$ is slope semistable. It is well-known that the set of slope
semistable vector bundles of fixed rank and degree is bounded. Otherwise,
$\mathscr E$ is unstable. In this case, we
consider the Harder-Narasimhan filtration of $\mathscr E$
\[
  0 \to \mathscr L_1 \to \mathscr E \to \mathscr L_2 \to 0,
\]
where $\mathscr L_1$, $\mathscr L_2$ are line bundles with
\[
  \deg(\mathscr L_1)  > \deg(\mathscr L_2).
\]
Then \eqref{eq:base-locus} implies the image of $\sigma_1 ,\ldots, \sigma_4$ in $H^0(\mathscr C_s, \mathscr L_2)$
cannot be all trivial.
Hence
\begin{equation}
  \label{eq:L_2-nonnegative}
  \deg(\mathscr L_2) \geq 0.
\end{equation}
Since $\deg(\mathscr L_1) + \deg(\mathscr L_2) = d$ is fixed,
we see that both
$\deg(\mathscr L_1)$ and $\deg(\mathscr L_2)$ are bounded.
Finally, since the set of line bundles of a fixed degree is bounded and the set
of extensions of two given bounded families of line bundles is bounded, the set of all such vector
bundles $\mathscr E$ is bounded.

Note that in the above example, if we consider all filtrations on $\mathscr E$
instead of the Harder-Narasimhan filtration, the
resulting sets of $\mathscr L_1, \mathscr L_2$ are never bounded even for one
fixed $\mathscr E$, since
$\deg(\mathscr L_1)$ can be arbitrarily negative.

\medskip

For the general case, the notion of slope semistable vector bundles is replaced
by Ramanathan's semistable principal bundles, and the notion of Harder-Narasimhan filtration
is replaced by the canonical reduction of a principal bundle to a parabolic subgroup.
Generalizing \eqref{eq:L_2-nonnegative}, which uses the GIT stability
condition on the target space, will be the key step of the proof
(Lemmas~\ref{lem:positivity-of-quotient} and \ref{lem:convex-hull-contains-zero-in-interior}).

In Section~\ref{sec:preparation}, we review the notion of principal
bundles, their stability and canonical reductions. In
Section~\ref{sec:main-proof}, we prove the main theorem by showing that the
canonical reductions of the $\Gamma$-bundles have finitely many topological
types, except for the most technical lemma, which we prove in Section~\ref{sec:bounding-the-degrees}.

\medskip
\noindent
\textbf{Acknowledgments}
We would like to thank Professors Bumsig Kim, Yongbin Ruan, Jun Yu,
Rachel Webb, Dingxin Zhang for their valuable discussion with the authors.

Both
J.~Li and Y.~Zhou are partially supported by the National Key Res.\ and
Develop.\ Program of China \#2020YFA0713200.
H.-L.~Chang is partially supported by grants Hong Kong GRF16300720 and
GRF16303122. S.~Guo is partially supported by NSFC 12225101 and 11890660.  J.~Li is also partially
supported by NSFC 12071079 and
by Shanghai SF grants 22YS1400100. W.-P.~Li is partially supported by grants
Hong Kong GRF16304119, GRF16306222 and GRF16305125.
Y.~Zhou is also partially supported by Shanghai Sci.\ and Tech.\ Develop.\ Funds
\#22QA1400800 and
Shanghai Pilot Program for Basic Research-Fudan Univ.\ 21TQ1400100 (22TQ001).
Y.~Zhou would also like to thank the
support of Alibaba Group as a DAMO Academy Young Fellow, and would like to
thank the support of Xiaomi Corporation as a Xiaomi Young Fellow.

\iffalse
  \bibliographystyle{alpha}
  \bibliography{references.bib}
\fi

%%% Local Variables:
%%% mode: latex
%%% TeX-master: "boundedness-paper"
%%% End:

\section{Preparation on principal bundles}

\label{sec:preparation}
Let $\mathbf G$ be an algebraic group, and $\pi: \mathscr C \to S$
be a family of smooth projective
curves over a scheme $S$ of finite type over $\mathbb C$.
For $s\in S(\mathbb C)$, we write $\mathrm{Bun}_{\mathbf G}(\mathscr C_{s})$ for
the set of isomorphism classes of principal $\mathbf G$-bundles on $\mathscr
C_{s} := \mathscr C|_{s}$. We call
\[
  \mathrm{Bun}_{\mathbf G}(\mathscr C/S) := \coprod_{s\in S(\mathbb C)}
  \mathrm{Bun}_{\mathbf G}(\mathscr C_{s})
\]
the set of principal $\mathbf G$-bundles on the fibers of $\pi$.
\begin{definition}
  \label{def:boundedness-G-bundles}
  A subset $\mathfrak  S \subset \mathrm{Bun}_{\mathbf G}(\mathscr C/S)$ is said
  to be bounded if there exists a finite type $S$-scheme $T$ and a principal
  $\mathbf G$-bundle $\mathscr G$ on $\mathscr C\times_{S} T$ such that for any
  $\mathscr G_{s} \in \mathfrak  S \cap \mathrm{Bun}_{\mathbf G}(\mathscr C_{s})$,
  $s\in S(\mathbb C)$, there exists $t\in T(\mathbb C)$ lying
  over $s$ such that $\mathscr G|_{t} \cong \mathscr G_{s}$.
\end{definition}

For a character $\chi \in \widehat{\mathbf G}$, denote by $\mathbb C_{\chi}$ the
$1$-dimensional $\mathbf G$-representation induced by $\chi$.
Thus for $\mathscr G_{s} \in \mathrm{Bun}_{\mathbf G}(\mathscr C_{s})$, the
Borel construction $\mathscr G_s \times_{\mathbf G} \mathbb C_{\chi}$ is a line
bundle on $\mathscr C_{s}$.
\begin{definition}
  \label{def:degree-principal-bundle}
  The degree  of $\mathscr G_{s}$ is defined to be the group homorphism
    $\mathrm{deg}_{\mathscr G_{s}} : \widehat{\mathbf G} \to \mathbb Z$ given by
  \[
    \mathrm{deg}_{\mathscr G_s}(\chi) = \deg_{\mathscr C_s}(\mathscr G_s
    \times_{\mathbf G} \mathbb C_{\chi}),\quad \forall \chi \in \widehat{\mathbf G}.
  \]
\end{definition}
For $d \in \mathrm{Hom}(\widehat{\mathbf G}, \mathbb Z)$, we denote by
\[
  \mathrm{Bun}_{\mathbf G}(\mathscr C/S, d) = \coprod_{s\in S(\mathbb C)}
  \mathrm{Bun}_{\mathbf G}(\mathscr C_{s}, d)
\]
the set of principal $\mathbf G$-bundles of degree $d$ on the fibers of $\pi$.

Given any morphism of algebraic groups
\[
  f : \mathbf G_1 \longrightarrow  \mathbf G_2,
\]
we define
\[
  \Psi_{f}: \mathrm{Bun}_{\mathbf G_1}(\mathscr C/S) \longrightarrow
  \mathrm{Bun}_{\mathbf G_2}(\mathscr C/S)
\]
to be the extension of structure groups along $f$, i.e.,
\[
  \Psi_{f} (\mathscr G_{s}) = \mathscr G_{s} \times_{\mathbf G_1} \mathbf G_2.
\]
We will also write $\Psi_{\mathbf G_1\to \mathbf G_2}$ if the map $f$ is clear from
the context.
Given $\mathscr G^\prime_{s} \in \mathrm{Bun}_{\mathbf G_2}(\mathscr C_s)$,
a reduction of its structure group to $\mathbf G_1$ is
a choice of $\mathscr G_{s} \in \mathrm{Bun}_{\mathbf G_1}(\mathscr C_s)$,
together with an isomorphism
\[
  \mathscr G_{s} \times_{\mathbf G_1} \mathbf G_2 \cong  \mathscr G^\prime_{s}
\]
as principal $\mathbf G_2$-bundles on $\mathscr C_s$.
When $f$ is a closed embedding, such a reduction is equivalent to a section of
$\mathscr G^\prime_{s}/ \mathbf G_{1} \to \mathscr C_{s}$.

From now on assume that $\mathbf G$ is a connected reductive group.
Fix
\[
  \mathbf T \subset \mathbf B \subset \mathbf G,
\]
where $\mathbf T$ is a maximal torus and $\mathbf B$ is a Borel subgroup.
Let $\mathbf P_1 = \mathbf B, \mathbf P_2,\ldots, \mathbf P_k$ be the parabolic
subgroups of $\mathbf G$ containing $\mathbf B$. Let $\mathbf L_i$ be the Levi
factor of $\mathbf P_i$.
\begin{definition}[\!{{\cite[Definition~1.1]{ramanathan1975stable}}}]
  \label{def:semistable-bundles}
  A $\mathscr G_{s} \in \mathrm{Bun}_{\mathbf G}(\mathscr C/S)$ is said to be
  semistable if for any $\mathbf P_i$ that is a maximal parabolic subgroup, and
  any reduction $\sigma: \mathscr C_s \to \mathscr G_{s}/\mathbf P_i$ of the
  structure group to $\mathbf P_i$, we have
  \[
    \deg(\sigma^*(T_{\mathscr G_s/\mathbf P_i})) \geq 0,
  \]
  where $T_{\mathscr G_s/\mathbf P_i}$ is the relative tangent bundle of
  $\mathscr G_s/\mathbf P_i \to \mathscr C_s$.
\end{definition}

For $d \in \mathrm{Hom}(\widehat{\mathbf G}, \mathbb Z)$,
we write
\[
  \mathrm{Bun}^{\mathrm{ss}}_{\mathbf G}(\mathscr C/S, d) = \coprod_{s\in S(\mathbb C)}
  \mathrm{Bun}^{\mathrm{ss}}_{\mathbf G}(\mathscr C_{s}, d)
\]
for the set of semistable principal $\mathbf G$-bundles of degree $d$ on the
fibers of $\pi : \mathscr C \to S$.

\begin{proposition}
  \label{prop:boundedness-semistable-G-bundles}
  $\mathrm{Bun}^{\mathrm{ss}}_{\mathbf G}(\mathscr C/S, d)$ is bounded.
\end{proposition}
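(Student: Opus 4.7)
The plan is to reduce the statement to the classical boundedness of vector bundles with bounded Harder--Narasimhan polygon, using a faithful representation of $\mathbf G$ to extend structure groups. Since $\mathbf G$ is reductive (hence affine), choose a faithful representation $\rho : \mathbf G \hookrightarrow GL(V)$ for some finite-dimensional $\mathbb C$-vector space $V$. For any $\mathscr G_s \in \mathrm{Bun}^{\mathrm{ss}}_{\mathbf G}(\mathscr C_s, d)$, applying $\Psi_\rho$ yields a vector bundle $\mathscr E_s := \mathscr G_s \times_{\mathbf G} V$ on $\mathscr C_s$ of rank $\dim V$ and determinant degree determined by $d$ and the character $\det \rho$.

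The key step is to establish a uniform bound on the Harder--Narasimhan polygon of $\mathscr E_s$, depending only on $d$, the genus $g$ of the fibers, and $\rho$. This is the classical consequence of Ramanathan's theorem: semistability of the principal bundle $\mathscr G_s$, as captured by Definition~\ref{def:semistable-bundles} applied to maximal parabolics, constrains the instability of any associated vector bundle via a comparison between the HN filtration of $\mathscr E_s$ and parabolic reductions of $\mathscr G_s$ along $\rho$. Once this uniform bound is in hand, standard results for vector bundles of fixed rank, degree, and bounded HN polygon supply a finite-type $S$-scheme $T_0$ together with a vector bundle $\mathscr E_{T_0}$ on $\mathscr C \times_S T_0$ such that every $\mathscr E_s$ above arises as a fiber.

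To recover the principal $\mathbf G$-bundle from $\mathscr E_s$, observe that a reduction of structure of the frame bundle $\mathrm{Frame}(\mathscr E_{T_0})$ along $\rho$ to $\mathbf G$ is equivalent to a section of the associated fiber bundle with fiber $GL(V)/\mathbf G$ over $\mathscr C \times_S T_0$. Since $GL(V)/\mathbf G$ is a quasi-affine scheme of finite type and the pullback degrees of the relevant line bundles are controlled by $d$, such sections are parametrized by a Hilbert-type scheme $T$ of finite type over $T_0$. Pulling back the tautological $\mathbf G$-bundle to $\mathscr C \times_S T$ exhibits the required bounded family, proving the proposition.

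The main obstacle is the uniform HN bound in the second paragraph, which is the heart of Ramanathan's argument and is exactly where semistability of the principal bundle is used. The rest of the proof is a formal translation between vector bundles and reductions of structure group. In practice the cleanest path may simply be to cite the family version of Ramanathan's theorem (due e.g.\ to Behrend or Schmitt), which asserts directly that the moduli stack $\mathrm{Bun}^{\mathrm{ss}}_{\mathbf G}(\mathscr C/S, d)$ is of finite type over $S$, whence boundedness in the sense of Definition~\ref{def:boundedness-G-bundles} is immediate after passing to a smooth atlas.
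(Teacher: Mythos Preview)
Your approach is correct but follows a different path from the paper. You extend the structure group along a faithful $\rho: \mathbf G \hookrightarrow GL(V)$ and bound the Harder--Narasimhan polygon of the associated vector bundle; the paper instead \emph{reduces} to the Borel $\mathbf B$ and shows (Lemma~\ref{lem:degree-bound-for-reduction-of-G-bundles}, following Holla--Narasimhan) that the associated $\mathbf T$-bundle may be chosen with degree in a fixed finite set, after which boundedness follows from the relative Picard functor together with an extension argument (Lemma~\ref{lem:from-L-bundles-to-G-bundles}). Both routes finish with the same ``recover $\mathbf G$ from $GL(n)$'' step via sections of the affine quotient $GL(n)/\mathbf G$. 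Your route is arguably more direct for this proposition in isolation --- once $V$ is decomposed into $\mathbf G$-irreducibles, each associated summand is semistable of fixed slope by Ramanan--Ramanathan, so the HN polygon is in fact determined --- but your phrasing of the key step as a comparison between the HN filtration of $\mathscr E_s$ and parabolic reductions of $\mathscr G_s$ is not quite how the argument runs, since $\rho^{-1}$ of the HN parabolic in $GL(V)$ need not be parabolic in $\mathbf G$. The paper's route buys something the authors need later: Lemma~\ref{lem:from-L-bundles-to-G-bundles} is stated for an arbitrary parabolic $\mathbf P_i$, not just the Borel, and that generality is exactly what feeds into bounding unstable bundles via their canonical reductions in Lemma~\ref{lem:bounding-by-degree-of-L}.
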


The case when $S$ is a point is proved in \cite{ramanathan1996moduli} and
\cite{holla2001generalisation}. Their proofs generalize to the relative case
almost verbatim. For completeness, we include the necessary details below, following
\cite{holla2001generalisation}. Note that the Borel group in
\cite[Proposition~3.1]{holla2001generalisation} is generalized to any parabolic
group in Lemma~\ref{lem:from-L-bundles-to-G-bundles}. This will be useful for the
canonical reduction argument later.

Before proving this proposition, we first prove
\begin{lemm}
  \label{lem:from-L-bundles-to-G-bundles}
  For any $\mathfrak S\subset \mathrm{Bun}_{\mathbf P_i}(\mathscr C/S)$, if
  $\Psi_{\mathbf P_i\to \mathbf L_i}(\mathfrak S)$ is bounded, then $\Psi_{\mathbf
    P_i\to \mathbf G}(\mathfrak S)$ is bounded.
\end{lemm}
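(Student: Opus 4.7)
The strategy is to show that $\mathfrak S$ itself is bounded as a subset of $\mathrm{Bun}_{\mathbf P_i}(\mathscr C/S)$, from which the conclusion follows by extending structure groups along the closed embedding $\mathbf P_i \hookrightarrow \mathbf G$. Write $\mathbf P_i = \mathbf L_i \ltimes \mathbf U_i$ for the Levi decomposition, so that the given map $\mathbf P_i \to \mathbf L_i$ is the projection modulo the unipotent radical $\mathbf U_i$. Since $\mathbf U_i$ is unipotent and normal in $\mathbf P_i$, it admits a descending filtration
\[
  \mathbf U_i = \mathbf U^{(0)} \supset \mathbf U^{(1)} \supset \cdots \supset \mathbf U^{(N)} = \{e\}
\]
by normal subgroups of $\mathbf P_i$ whose successive quotients $\mathbf V^{(j)} := \mathbf U^{(j)}/\mathbf U^{(j+1)}$ are vector groups carrying a linear $\mathbf L_i$-action (via conjugation, through $\mathbf P_i \to \mathbf L_i$).

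By hypothesis, there exist a finite-type $S$-scheme $T_0$ and a family $\mathscr Q$ of $\mathbf L_i$-bundles on $\mathscr C \times_S T_0$ parametrizing $\Psi_{\mathbf P_i \to \mathbf L_i}(\mathfrak S)$. Form the associated relative vector bundles $\mathscr E^{(j)} := \mathscr Q \times_{\mathbf L_i} \mathbf V^{(j)}$ on $\mathscr C \times_S T_0$. The key classifying fact I will invoke is: for a short exact sequence of groups $1 \to \mathbf V \to \mathbf P^\prime \to \mathbf P^{\prime\prime} \to 1$ with $\mathbf V$ an abelian vector group, the set of lifts of a fixed $\mathbf P^{\prime\prime}$-bundle $\mathscr P^{\prime\prime}$ to a $\mathbf P^\prime$-bundle is either empty or a torsor under $H^1$ of the associated vector bundle $\mathscr P^{\prime\prime} \times_{\mathbf P^{\prime\prime}} \mathbf V$. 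Applied inductively to the tower $\mathbf L_i = \mathbf P_i/\mathbf U^{(0)} \twoheadleftarrow \mathbf P_i/\mathbf U^{(1)} \twoheadleftarrow \cdots \twoheadleftarrow \mathbf P_i/\mathbf U^{(N)} = \mathbf P_i$, this reduces the problem to controlling a sequence of $H^1$'s in families.

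Concretely, I will construct a tower of finite-type $S$-schemes $T_0 \leftarrow T_1 \leftarrow \cdots \leftarrow T_N$ carrying universal families of $\mathbf P_i/\mathbf U^{(j+1)}$-bundles on $\mathscr C \times_S T_j$. At each step, stratify $T_{j-1}$ into locally closed subschemes on which the fiberwise dimensions of $H^1$ of the pullback of $\mathscr E^{(j)}$ are constant; on each stratum, cohomology and base change identifies these $H^1$'s as the fibers of a vector bundle, whose total space (a relative affine space parametrizing the possible lifts) furnishes the corresponding piece of $T_j$. Gluing produces $T_j$, and at $j = N$ we obtain a finite-type family of $\mathbf P_i$-bundles realizing every element of $\mathfrak S$. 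Extending structure groups along $\mathbf P_i \hookrightarrow \mathbf G$ then yields the desired bound on $\Psi_{\mathbf P_i \to \mathbf G}(\mathfrak S)$.

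The principal technical obstacle is carrying out the $H^1$-classification of lifts in the relative setting together with the cohomology-and-base-change stratification at each of the $N$ steps of the filtration. Each individual step is standard for proper families of curves with coherent sheaves, but care is needed to ensure that the iterated construction still produces a scheme of finite type over $S$ and that the identification of lifts with $H^1$-torsors commutes with the base changes performed along the tower.
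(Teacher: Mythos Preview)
Your approach is correct and proves the slightly stronger statement that $\mathfrak S$ itself is bounded in $\mathrm{Bun}_{\mathbf P_i}(\mathscr C/S)$. The paper instead follows \cite{holla2001generalisation}: it first treats $\mathbf G = GL(n)$ (where a $\mathbf P_i$-bundle is a vector bundle with a flag, and bounding the graded pieces bounds the iterated extensions), then for general $\mathbf G$ chooses a faithful representation $\mathbf G \hookrightarrow GL(n)$ together with a $\mathbf P$-stable filtration on which the unipotent radical acts trivially on graded pieces, obtaining a commutative square relating $(\mathbf P,\mathbf L)$ to a parabolic pair $(\mathbf P',\mathbf L')$ in $GL(n)$; bounded $\mathbf L$-bundles yield bounded $\mathbf L'$-bundles, hence bounded $GL(n)$-bundles by the first step, and finally bounded $\mathbf G$-bundles by a Hilbert-scheme argument using that $GL(n)/\mathbf G$ is affine. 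Your filtration-and-$H^1$ argument is essentially the intrinsic version of the paper's $GL(n)$ step, carried out for an arbitrary reductive group, so it bypasses both the embedding into $GL(n)$ and the passage back from $GL(n)$-bundles to $\mathbf G$-bundles. Two small points worth making explicit in your write-up: the conjugation action of $\mathbf P_i/\mathbf U^{(j)}$ on $\mathbf V^{(j)}$ factors through $\mathbf L_i$ precisely because $[\mathbf U_i,\mathbf U^{(j)}]\subset \mathbf U^{(j+1)}$ for the lower central series, which is what lets you define $\mathscr E^{(j)}$ over $T_0$; and the existence of a relative lift at each stage (needed to trivialize the $H^1$-torsor in families) follows after passing to an affine cover of $T_{j-1}$, since then the obstruction in $H^2$ vanishes by Leray and $R^2\pi_*=0$ for relative curves.
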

\begin{proof}
  The proof is similar to that of
  \cite[Proposition~3.1]{holla2001generalisation}.

  The first part ($\mathbf G=GL(n)$ case) is almost the same, except that the
  flag $\{E_i\}$ in \cite{holla2001generalisation} need not be full, since
  $\mathbf P_i$ is a parabolic subgroup.

  The second part (boundness of the associated $GL(n)$-bundles) needs slight
  modification. Write $\mathbf P = \mathbf P_i$ and $\mathbf L= \mathbf L_i$.
  Let $GL(n) = GL(W)$ where $W$ is some faithful representation of $\mathbf G$. Let
  $0\subset W_1 \subset W_2 \subset \cdots \subset W_n = W$ be a filtration by
  $\mathbf P$-invariant subspaces such that $U_{\mathbf P}$ acts trivially on the
  graded pieces $W_{i}/W_{i-1}$, where $U_{\mathbf P}$ is the unipotent radical
  of $\mathbf P$. Let $\mathbf P^\prime\subset GL(n)$ be the parabolic subgroup
  fixing the flag $\{W_i\}$. Let $U_{\mathbf P^\prime}$ be the unipotent radical of
  $\mathbf P^\prime$. Let $\mathbf L^\prime$ be the Levi factor of $\mathbf
  P^\prime$.
  Then by the construction of $\mathbf P^\prime$, $U_{\mathbf P}$ is mapped to
  $U_{\mathbf P^\prime}$, and we have the following commutative diagram similar
  to that in \cite{holla2001generalisation}.
  \[
    \begin{tikzcd}
      \mathbf G \arrow[r] & GL(n)\\
      \mathbf P \arrow[r] \arrow[d]\arrow[u, hook]& \mathbf P^\prime\arrow[u,hook] \arrow[d]\\
      \mathbf L \arrow[r] & \mathbf L^\prime.
    \end{tikzcd}
  \]
  The rest is similar. Start with a family of principal $\mathbf P$-bundles and
  extend the structure groups using the lower commutative square. If the
  associated $\mathbf L$-bundles are bounded, then the associated $\mathbf
  L^\prime$-bundles are bounded. Hence the associated $GL(n)$-bundles are bounded by
  the previous step.

  The third part (from the boundedness of $GL(n)$-bundles to the boundedness of
  $\mathbf G$-bundles) is verbatim.
\end{proof}

\begin{lemm}
  \label{lem:degree-bound-for-reduction-of-G-bundles}
  There exists a finite $\Omega \subset \mathrm{Hom}(\widehat {\mathbf T},
  \mathbb Z)$, such that each $\mathscr G_s \in
  \mathrm{Bun}^{\mathrm{ss}}_{\mathbf G}(\mathscr C/S, d)$ has a reduction to $\mathbf B$
  whose associated $\mathbf T$-bundle has degree in $\Omega$.
\end{lemm}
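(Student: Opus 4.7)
The plan is to adapt the Ramanathan--Holla strategy: for each semistable $\mathscr G_s$, produce one $\mathbf B$-reduction whose $\mathbf T$-degree lies in an a priori finite set, and then argue that the set of such degrees depends only on $d$ and the group-theoretic data.

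First, every principal $\mathbf G$-bundle on the smooth projective curve $\mathscr C_s$ admits at least one reduction to $\mathbf B$, because the flag fibration $\mathscr G_s/\mathbf B \to \mathscr C_s$ is smooth projective with rational fibers isomorphic to $\mathbf G/\mathbf B$ and hence admits a section (Tsen's theorem, or Drinfeld--Simpson for a family). For any such reduction $\sigma_{\mathbf B}$, the associated $\mathbf T$-bundle has degree $d_{\mathbf T} \in \mathrm{Hom}(\widehat{\mathbf T}, \mathbb Z)$. Extending along $\mathbf B \hookrightarrow \mathbf G$ recovers $\mathscr G_s$, so $d_{\mathbf T}$ restricts to $d$ on $\widehat{\mathbf G}$; this pins $d_{\mathbf T}$ to a fixed coset of the coroot sublattice of $\mathrm{Hom}(\widehat{\mathbf T}, \mathbb Z)$.

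Second, I would translate semistability into linear inequalities on $d_{\mathbf T}$. For each maximal parabolic $\mathbf P_i \supset \mathbf B$, composing $\sigma_{\mathbf B}$ with $\mathbf B \hookrightarrow \mathbf P_i$ yields a $\mathbf P_i$-reduction $\sigma_i$, and Definition~\ref{def:semistable-bundles} gives $\deg \sigma_i^* T_{\mathscr G_s/\mathbf P_i} \geq 0$. Since $T_{\mathscr G_s/\mathbf P_i}$ is the bundle associated to the $\mathbf P_i$-representation $\mathfrak g/\mathfrak p_i$, pulling back to the $\mathbf T$-refinement splits it into line bundles whose $\mathbf T$-weights are exactly the roots occurring in $\mathfrak g/\mathfrak p_i$. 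The inequality becomes a linear half-space constraint on $d_{\mathbf T}$, one for each maximal parabolic containing $\mathbf B$.

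Third, one must choose $\sigma_{\mathbf B}$ extremally so that two-sided bounds apply. Among all $\mathbf B$-reductions of $\mathscr G_s$ (the sections of $\mathscr G_s/\mathbf B \to \mathscr C_s$), I would select a component on which a height function such as $\langle 2\rho, d_{\mathbf T}\rangle$ is minimal. For such a minimal reduction, if one of the reverse inequalities failed by too much, one could modify $\sigma_{\mathbf B}$ through a Schubert cell in $\mathbf G/\mathbf B$ in the direction of the offending root and strictly decrease the height, contradicting minimality. Hence the reverse inequalities also hold up to a bounded error, and combined with the coset constraint $d_{\mathbf T}$ lies in a finite $\Omega$ depending only on $d$, $\mathbf G$, and the genus. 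The main obstacle is this third step: verifying that the Schubert-cell modification is always available and strictly decreases the height. This is the technical content of \cite[Proposition~3.1]{holla2001generalisation}, which the paper explicitly invokes, and once granted the finite set $\Omega$ is extracted from the resulting polytope of admissible $d_{\mathbf T}$.
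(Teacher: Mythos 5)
Your overall strategy matches the paper's: both invoke the Holla--Narasimhan argument, and you correctly identify the three ingredients (existence of $\mathbf B$-reductions, semistability inequalities, and an extremal choice of reduction to make the bounds two-sided). However, there are two issues worth flagging.

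First, the citation is misplaced. You attribute the key extremal-reduction step to \cite[Proposition~3.1]{holla2001generalisation}, but that proposition is the one underlying the paper's Lemma~\ref{lem:from-L-bundles-to-G-bundles} (passing from bounded $\mathbf L$-bundles to bounded $\mathbf G$-bundles). The relevant results for the present lemma are \cite[Theorem~1.1]{holla2001generalisation} (existence of a $\mathbf B$-reduction with $\deg N_\sigma \le g\cdot\dim(\mathbf G/\mathbf B)$), the first paragraph of the proof of their Theorem~1.2 on page~331 (where the finite set of degrees is produced), and the last paragraph of the proof of their Proposition~3.4, which in turn uses Ramanathan's bound $\deg \le 0$ for line bundles attached to dominant characters of $\mathbf B$ when $\mathbf G$ is semisimple. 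Your ``height-minimizing reduction plus Schubert modification'' sketch is in the spirit of these results, but it is not Proposition~3.1, and the minimality argument as you state it (existence of a minimum of $\langle 2\rho, d_{\mathbf T}\rangle$ over all sections) is not obviously well-posed without the intersection-theoretic bound of Theorem~1.1.

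Second, and more substantively, the actual content of the lemma is the \emph{uniformity in $s$}: Holla--Narasimhan work over a single fixed curve, so their finite set a priori depends on the curve, not just on the genus. The paper's proof is devoted almost entirely to tracing through Holla--Narasimhan's bounds and checking that each of them depends only on $g$, $\dim(\mathbf G/\mathbf B)$, the root data of $\mathbf G$, and $d$ --- never on the specific curve $\mathscr C_s$. You assert at the end that $\Omega$ ``depends only on $d$, $\mathbf G$, and the genus,'' but this is precisely the point that requires verification rather than assertion. Your proposal would be complete if you replaced the Schubert-cell sketch by the correct citations and added the explicit check that the constants appearing in \cite[Theorems~1.1 and 1.2]{holla2001generalisation} and in Ramanathan's bound are curve-independent.
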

\begin{proof}
  This follows immediately from \cite{holla2001generalisation}.

  Indeed, the author showed that there exists a
  subgroup $L \subset \mathrm{Hom}(\widehat{\mathbf T}, \mathbb Z)$ of maximal
  rank, and for each $s\in S(\mathbb C)$ there exists a finite subset $\Omega_{s}
  \subset \mathrm{Hom}(L, \mathbb Z)$, such that each $\mathscr G_s
  \in \mathrm{Bun}_{\mathbf G}(\mathscr C_s)$ has a reduction
  to $\mathbf B$ with $\deg_{\mathscr G_{s,\mathbf T}}|_{L} \in \Omega_s$,
  where $\mathscr G_{s,\mathbf T}$ denotes the $\mathbf T$-bundle associated to
  the reduction of $\mathscr G_s$, via the extension of structure groups along
  $\mathbf B \to \mathbf T$. This
  is stated in the first paragraph of the proof of
  \cite[Theorem~1.2]{holla2001generalisation}, on page 331.
  Note that they used $\chi^*$ to denote the group of characters. Now, since $L$
  is of maximal rank, the restriction morphism $\mathrm{Hom}(\widehat{\mathbf
    T}, \mathbb Z) \to \mathrm{Hom}(L, \mathbb Z)$ is injective.
  We take
  \[
    \Omega = \bigcup_{s \in S(\mathbb C)} \Omega_{s} \cap
    \mathrm{Hom}(\widehat{\mathbf T}, \mathbb Z)
  \]
  and thus we have $\mathscr G_{s,\mathbf T}
  \in \Omega$.

  We verify that the $\Omega_{s}$ can be chosen to be independent of $s$, so
  that $\Omega$ is finite.
  Indeed, to bound the set of $\deg_{\mathscr G_{s, \mathbf T}}$ coming from
  appropriately chosen reduction of $\mathscr G_s$ as above, one
  bound comes from \cite[Theorem~1.1]{holla2001generalisation}, which states
  that there exists a reduction $\sigma$ such that $\deg(N_{\sigma}) \leq g \cdot
  \mathrm{dim}(\mathbf G/\mathbf B)$, where $N_\sigma$ is the
  $\sigma^*T_{\mathscr G_s/ \mathbf B}$ in
  Definition~\ref{def:semistable-bundles}.
  Another bound comes from
  \cite{ramanathan1996moduli}, which states that if $\mathbf G$ is semi-simple,
  then the degree of the line bundle associated to a dominant character of
  $\mathbf B$ is $\leq 0$. It is used in the last paragraph of the proof of
  \cite[Proposition~3.4]{holla2001generalisation}.
  All other bounds come from the definition of semistable principal bundles, which
  only involve the structure of $\mathbf G$.
\end{proof}

\begin{proof}[Proof for Proposition~\ref{prop:boundedness-semistable-G-bundles}]
  A principal $\mathbf T$-bundle is equivalent to a tuple of line bundles.
  The relative Picard functor for a fixed degree is bounded. Hence
  the lemma follows immediately from
  Lemma~\ref{lem:from-L-bundles-to-G-bundles}
  and
  Lemma~\ref{lem:degree-bound-for-reduction-of-G-bundles}.
\end{proof}

Recall that we have fixed the Borel $\mathbf B\subset \mathbf G$, and
$\mathbf P_1 = \mathbf B, \mathbf P_2,\ldots, \mathbf P_k$ are all the parabolic
subgroups of $\mathbf G$ containing $\mathbf B$.
\begin{defiprop}[{\!\cite[Proposition~1]{ramanathan1979moduli},\cite[pg.~202 and
    Theorem~4.1]{biswas2004harder}}]
  \label{def:canonical-reductions}
  A reduction $\mathscr Q_s\in \mathrm{Bun}_{\mathbf P_i}(\mathscr C_s)$ of
  $\mathscr G_{s} \in \mathrm{Bun}_{\mathbf G}(\mathscr C_s)$ is called a
  canonical reduction, or a Harder-Narasimhan reduction, if the following two
  conditions hold:
  \begin{enumerate}
  \item the principal $\mathbf L_i$-bundle $\mathscr Q_s \times_{\mathbf P_i}
    \mathbf L_i$ is semistable;
  \item
    for any nontrivial character $\chi$ of $\mathbf P_i$ which is a
    nonnegative linear combination of simple roots, $\deg_{\mathscr Q_{s}}(\chi)
    >0$.
  \end{enumerate}
  Up to isomorphisms, any $\mathscr G_s\in \mathrm{Bun}_{\mathbf G}(\mathscr
  C_s)$ admits a unique canonical reduction to a unique $\mathbf P_i$.
\end{defiprop}
Now, for $\mathscr G_{s} \in \mathrm{Bun}_{\mathbf G}(\mathscr C_s)$, denote its canonical reduction by
\[
  \mathscr G^{\mathrm{can}}_s \in \coprod_{i=1}^k \mathrm{Bun}_{\mathbf
    P_i}(\mathscr C_{s}).
\]
\begin{lemm}
  \label{lem:bounding-by-degree-of-L}
  Let $\mathfrak S\subset \mathrm{Bun}_{\mathbf G}(\mathscr C/S)$.
  Suppose that
  \[
    \{\deg_{\mathscr G^{\mathrm{can}}_s} \mid \mathscr G_s \in \mathfrak S\}
    \subset \coprod_{i=1}^k \mathrm{Hom}(\widehat{\mathbf P}_i, \mathbb Z)
  \]
  is finite.
  Then $\mathfrak S$ is bounded.
\end{lemm}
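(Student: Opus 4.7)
The plan is to stratify $\mathfrak S$ according to the topological type of the canonical reduction and then reduce each stratum to the already-proved boundedness of semistable bundles on a Levi subgroup. Since $\mathscr G^{\mathrm{can}}_s$ is unique up to isomorphism by Definition-Proposition~\ref{def:canonical-reductions}, the assignment $\mathscr G_s \mapsto (i, \deg_{\mathscr G^{\mathrm{can}}_s})$ is well-defined, and by hypothesis its image is finite. Because a finite union of bounded subsets is bounded, it suffices to fix one parabolic $\mathbf P_i$ and one $d \in \mathrm{Hom}(\widehat{\mathbf P}_i, \mathbb Z)$ and prove boundedness of the stratum
\[
  \mathfrak S_{i,d} := \{\mathscr G_s \in \mathfrak S : \mathscr G^{\mathrm{can}}_s \in \mathrm{Bun}_{\mathbf P_i}(\mathscr C/S) \text{ has degree } d\}.
\]

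For this, I would work with the corresponding set of canonical reductions
\[
  \mathfrak T_{i,d} := \{\mathscr G^{\mathrm{can}}_s : \mathscr G_s \in \mathfrak S_{i,d}\} \subset \mathrm{Bun}_{\mathbf P_i}(\mathscr C/S)
\]
and push them to the Levi. Condition~(1) of Definition-Proposition~\ref{def:canonical-reductions} says that $\Psi_{\mathbf P_i\to \mathbf L_i}(\mathfrak T_{i,d})$ consists of semistable principal $\mathbf L_i$-bundles. Moreover, every character of $\mathbf P_i$ is trivial on the unipotent radical and hence factors through the Levi quotient, so pullback gives an isomorphism $\widehat{\mathbf L}_i \cong \widehat{\mathbf P}_i$; consequently the degree of the associated $\mathbf L_i$-bundle is determined by $d$, and
\[
  \Psi_{\mathbf P_i\to \mathbf L_i}(\mathfrak T_{i,d}) \subset \mathrm{Bun}^{\mathrm{ss}}_{\mathbf L_i}(\mathscr C/S, d')
\]
for a single $d' \in \mathrm{Hom}(\widehat{\mathbf L}_i, \mathbb Z)$.

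Now I would invoke Proposition~\ref{prop:boundedness-semistable-G-bundles} applied to the connected reductive group $\mathbf L_i$ to conclude that $\Psi_{\mathbf P_i\to \mathbf L_i}(\mathfrak T_{i,d})$ is bounded, and then apply Lemma~\ref{lem:from-L-bundles-to-G-bundles} to upgrade this to boundedness of $\Psi_{\mathbf P_i\to \mathbf G}(\mathfrak T_{i,d})$. Since extending a canonical reduction back along $\mathbf P_i\hookrightarrow \mathbf G$ recovers the original bundle, this last set equals $\mathfrak S_{i,d}$, completing the stratum and hence the proof.

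This lemma is essentially an assembly of the two previous results, so I do not anticipate a serious technical obstacle. The only content particular to this step is the identification $\widehat{\mathbf L}_i \cong \widehat{\mathbf P}_i$ used to collapse the family into a single fixed-degree component of $\mathbf L_i$-semistables, and that is automatic from the unipotent radical $U_{\mathbf P_i}$ having no nontrivial characters.
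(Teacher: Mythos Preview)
Your argument is correct and matches the paper's proof exactly: identify $\widehat{\mathbf L}_i \cong \widehat{\mathbf P}_i$, use that the associated $\mathbf L_i$-bundles are semistable of fixed degree so Proposition~\ref{prop:boundedness-semistable-G-bundles} bounds them, and then apply Lemma~\ref{lem:from-L-bundles-to-G-bundles}. The paper compresses the stratification into a single sentence, but the content is identical.
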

\begin{proof}
  Observe that $\widehat{\mathbf L}_i = \widehat{\mathbf P}_i$ and
  $\deg_{\mathscr G^{\mathrm{can}}_s} = \deg_{\mathscr G^{\mathrm{can}}_s\times_{\mathbf P_i} \mathbf L_i}$.
  The set of associated
  $\mathbf L_i$-bundles, which are semistable by the definition of canonical
  reduction, is bounded by Proposition~\ref{prop:boundedness-semistable-G-bundles}.
  Then the lemma follows from  Lemma~\ref{lem:from-L-bundles-to-G-bundles}.
\end{proof}

Now we consider  disconnected reductive groups.
Let $\mathbf G^\prime$ be an algebraic group containing $\mathbf G$ as its
identity component.
For $\mathscr G_s^\prime \in \mathrm{Bun}_{\mathbf G^\prime}(\mathscr C_{s})$,
$\mathscr G_s^\prime/\mathbf G \to \mathscr C_{s}$ is a $\mathbf G^\prime/ \mathbf G$-bundle and
$\mathscr G_s^\prime \to \mathscr G_s^\prime/\mathbf G$ is a $\mathbf G$-bundle.
Thus $\mathscr G_s^\prime/\mathbf G$ is a smooth projective not necessarily connected curve.
\begin{lemm}
  \label{lem:boundedness-finite-cover}
  Given any finite subset
  \[
    \Omega \subset \coprod_{i=1}^k \mathrm{Hom}(\widehat{\mathbf P}_i, \mathbb Z),
  \]
  the set
  \begin{equation}
    \label{eq:family-of-finite-covers-and-G-bundles}
    \begin{aligned}
      \{ \mathscr G^\prime_s \to \mathscr C^\prime_s \to  \mathscr C_s \mid
      & \mathscr C^\prime_s \to \mathscr C_s \text{ is a principal }\mathbf
        G^\prime/\mathbf G\text{-bundle}, \\
      & \mathscr G^\prime_s \to \mathscr C^\prime_s \text{ is a principal }\mathbf
        G\text{-bundle}, s \in S(\mathbb C), \text{such that }\\
      & \text{for each connected component }\tilde{\mathscr C}_{s} \subset \mathscr C_{s}^\prime,
        \text{ we have}\\
      & \deg_{
        {(
        \mathscr G^\prime_s|_{\tilde{\mathscr C}_{s}}
        )}^{\mathrm{can}}
        }
        \in \Omega
        \}
    \end{aligned}
  \end{equation}
  is bounded.
\end{lemm}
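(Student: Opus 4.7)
The plan is to bootstrap from Lemma~\ref{lem:bounding-by-degree-of-L}, which handles $\mathbf{G}$-bundles with prescribed canonical reduction on a family of connected smooth projective curves. Two steps are needed: first bound the finite covers $\mathscr{C}^\prime_s \to \mathscr{C}_s$, and second apply the previous lemma on each connected component. Write $H := \mathbf{G}^\prime/\mathbf{G}$; this is a finite group since $\mathbf{G}$ is the identity component of $\mathbf{G}^\prime$. A principal $H$-bundle on a smooth projective curve is the same as a finite \'etale cover of degree $|H|$ with a free $H$-action. For a family of smooth projective curves of bounded genus, the moduli of such covers is representable by a finite type $S$-scheme $S_1$, for instance as a locally closed subscheme of the relative Hilbert scheme $\mathrm{Hilb}(\mathscr{C}/S)$ with the Hilbert polynomial corresponding to degree $|H|$ subschemes, cut out by the \'etaleness and free $H$-action conditions. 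Let
\[
  \mathscr{C}^\prime_{S_1} \longrightarrow \mathscr{C}_{S_1} := \mathscr{C}\times_S S_1
\]
denote the universal principal $H$-bundle, so that every cover appearing in~\eqref{eq:family-of-finite-covers-and-G-bundles} is realized as a geometric fiber.

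Since $|H|$ and the genera of fibers of $\pi$ are bounded, only finitely many combinations of (number of components, genera of components) can occur for the fibers of $\mathscr{C}^\prime_{S_1} \to S_1$. Stratifying $S_1$ accordingly, we may assume a fixed decomposition $\mathscr{C}^\prime_{S_1} = \coprod_{j=1}^{N} \tilde{\mathscr{C}}_j$, where each $\tilde{\mathscr{C}}_j \to S_1$ is a family of connected smooth projective curves of a fixed genus.

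For each $j$, Lemma~\ref{lem:bounding-by-degree-of-L} applied to $\tilde{\mathscr{C}}_j \to S_1$ with the finite set $\Omega$ yields a finite type $S_1$-scheme $T_j$ together with a universal $\mathbf{G}$-bundle on $\tilde{\mathscr{C}}_j \times_{S_1} T_j$ whose canonical reduction has degree in $\Omega$. Setting $T := T_1 \times_{S_1} \cdots \times_{S_1} T_N$ and pulling back, we obtain a $\mathbf{G}$-bundle on $\mathscr{C}^\prime_{S_1} \times_{S_1} T$ realizing every triple $(\mathscr{G}^\prime_s \to \mathscr{C}^\prime_s \to \mathscr{C}_s)$ in~\eqref{eq:family-of-finite-covers-and-G-bundles}.

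The main obstacle is the first step, the boundedness of the family of finite \'etale $H$-torsors; this is a folklore fact but deserves careful justification, either via the representability of the relative Hilbert scheme sketched above, or via the finiteness of the set of principal $H$-bundles on each fiber combined with a constructibility argument on $S$. Once this input is in place, all subsequent steps are formal combinations of the stratification argument and repeated applications of Lemma~\ref{lem:bounding-by-degree-of-L} with fibered products over $S_1$.
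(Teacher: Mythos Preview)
Your overall strategy matches the paper's: first bound the family of principal $H=\mathbf G'/\mathbf G$-bundles $\mathscr C'_s\to\mathscr C_s$, then invoke Lemma~\ref{lem:bounding-by-degree-of-L} on the connected components. The paper's proof is a three-line version of exactly this, phrasing the first step as ``the genus of $\mathscr C'_s$ is bounded, and the set of isomorphisms between fibers of two bounded families of curves is bounded.''

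One slip: the relative Hilbert scheme $\mathrm{Hilb}(\mathscr C/S)$ parametrizes closed subschemes of $\mathscr C$, not finite \'etale covers of $\mathscr C$; a degree-$|H|$ cover is not a length-$|H|$ subscheme. Your alternative justification (finiteness of $H$-torsors on each fiber plus constructibility/spreading out) is the correct one, and is essentially what the paper is gesturing at. With that fix the argument goes through.
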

\begin{proof}
  The genus of $\mathscr C^\prime_s$ is bounded. Given two bounded family of
  curves, the set of isomorphisms between their fibers is bounded. Hence the set of
  principal $\mathbf G^\prime/\mathbf G$-bundles $\mathscr C^\prime_s \to
  \mathscr C_s$ is bounded. Hence the lemma follows from
  Lemma~\ref{lem:bounding-by-degree-of-L}.
\end{proof}

\begin{lemm}
  \label{lem:bounding-by-degree-of-L-disconnected-case}
  Let $\mathfrak  S \subset \mathrm{Bun}_{\mathbf G^\prime}(\mathscr C/ S)$.
  Suppose that
  \begin{equation}
    \label{eq:degree-of-bundles-over-finite-cover}
    \begin{aligned}
      \{
      \deg_{
      {(
      \mathscr G^\prime_s|_{\tilde{\mathscr C}_{s}}
      )}^{\mathrm{can}}
      }
      \mid & \mathscr G^\prime_{s} \in \mathfrak S\cap
             \mathrm{Bun}_{\mathbf G^\prime}(\mathscr C_{s}), s \in S(\mathbb C),\\
           & \tilde{\mathscr C}_{s}\text{ is a
             connected component of }\mathscr G^\prime_{s}/\mathbf G
             \}
    \end{aligned}
  \end{equation}
  is finite, then $\mathfrak S$ is bounded.
\end{lemm}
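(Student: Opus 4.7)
\emph{Plan.} My approach is to reduce the statement to Lemma~\ref{lem:boundedness-finite-cover} by decomposing each $\mathbf{G}'$-bundle on $\mathscr{C}_s$ into the pair consisting of its $\mathbf{G}'/\mathbf{G}$-quotient cover and the underlying $\mathbf{G}$-bundle on that cover, applying the preceding lemma, and then recovering the $\mathbf{G}'$-structure via a finite-type layer of additional descent data.

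Since $\mathbf{G}$ is the identity component of the algebraic group $\mathbf{G}'$, the quotient $H := \mathbf{G}'/\mathbf{G}$ is a finite group. For each $\mathscr{G}'_s \in \mathfrak{S}$, setting $\mathscr{C}'_s := \mathscr{G}'_s/\mathbf{G}$ realizes $\mathscr{C}'_s \to \mathscr{C}_s$ as a principal $H$-bundle (whose connected components are precisely the $\tilde{\mathscr{C}}_s$ appearing in \eqref{eq:degree-of-bundles-over-finite-cover}), and $\mathscr{G}'_s \to \mathscr{C}'_s$ as a principal $\mathbf{G}$-bundle. The hypothesis on the set in \eqref{eq:degree-of-bundles-over-finite-cover} is precisely the input to Lemma~\ref{lem:boundedness-finite-cover} applied to this collection of triples, so the set of pairs $(\mathscr{C}'_s \to \mathscr{C}_s, \mathscr{G}'_s \to \mathscr{C}'_s)$ arising from $\mathfrak{S}$ is bounded. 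This yields a finite-type $S$-scheme $T_0$ together with a universal pair $(\tilde{\mathscr{C}} \to \mathscr{C}\times_S T_0, \tilde{\mathscr{P}})$ through which every such pair factors.

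It remains to upgrade this bounded family of pairs to a bounded family of $\mathbf{G}'$-bundles on $\mathscr{C}\times_S T_0$. For this I will argue that the forgetful morphism, sending a $\mathbf{G}'$-bundle $\mathscr{G}'_s$ on $\mathscr{C}_s$ to the pair $(\mathscr{G}'_s/\mathbf{G}, \mathscr{G}'_s)$, is representable by a morphism of finite type. Concretely, given a fixed pair $(\mathscr{C}'_s, \mathscr{P}_s)$, a lift to a $\mathbf{G}'$-bundle on $\mathscr{C}_s$ is a descent datum for the extended bundle $\mathscr{P}_s \times^{\mathbf{G}} \mathbf{G}'$ along the finite \'etale $H$-cover $\mathscr{C}'_s \to \mathscr{C}_s$; the space of such descent data is a locally closed subscheme of the $\mathrm{Isom}$-scheme of $\mathbf{G}'$-bundles on $\mathscr{C}'_s \times_{\mathscr{C}_s} \mathscr{C}'_s$, and is of finite type because $H$ is finite. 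Performing this construction in families gives a finite-type $T_0$-scheme $T$ carrying a universal $\mathbf{G}'$-bundle on $\mathscr{C}\times_S T$ through which every element of $\mathfrak{S}$ factors, proving boundedness.

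The main obstacle is the third paragraph: making precise, in families, that the $\mathbf{G}'$-structure on a bounded family of pairs is parametrized by a finite-type scheme. This reduces to a descent-and-$\mathrm{Isom}$-scheme argument whose essential input is the finiteness of the component group $H$ together with standard finite-type-ness of $\mathrm{Isom}$-schemes between principal bundles of fixed isomorphism class on a bounded family of curves. Modulo this verification, the conclusion is a direct application of Lemma~\ref{lem:boundedness-finite-cover}.
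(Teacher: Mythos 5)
Your proposal is correct and takes essentially the same route as the paper: both apply Lemma~\ref{lem:boundedness-finite-cover} to bound the set of pairs $(\mathscr C'_s\to\mathscr C_s,\ \mathscr G'_s\to\mathscr C'_s)$, and then recover the $\mathbf G'$-structure as a finite-type layer of additional data whose boundedness comes down to finiteness of Isom-schemes of principal bundles on a bounded family of curves. The paper makes this extra data explicit by choosing representatives $g_i\in\mathbf G'$ of the components of $\mathbf G'/\mathbf G$ and encoding the $\mathbf G'$-action as $\mathbf G$-bundle isomorphisms $f_i:\mathscr G'_s\to g_i^*(\mathscr G'_s)^{g_i}$, whereas you phrase the same data as a (twisted-equivariant) descent datum along the finite $H$-cover $\mathscr C'_s\to\mathscr C_s$; these two descriptions coincide.
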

\begin{proof}
  Apply Lemma~\ref{lem:boundedness-finite-cover} with $\Omega$ equals to
  \eqref{eq:degree-of-bundles-over-finite-cover}. After replacing $S$ by another
  finite-type $S$-scheme, we may assume that there exists
  \begin{equation}
    \label{eq:G-prime-to-C-prime-to-C-to-S}
    \mathscr G^\prime \longrightarrow
    \mathscr C^\prime \longrightarrow  \mathscr C \longrightarrow S
  \end{equation}
  whose fibers over all $s\in S(\mathbb C)$ contain the set
  \eqref{eq:family-of-finite-covers-and-G-bundles}.
  Choose a closed point $g_i$ of each connected component of $\mathbf G^\prime$,
  $i  =1 ,\ldots, n$.
  For any $s\in S(\mathbb C)$, we have a bijection between
  \begin{enumerate}
  \item
    a $\mathbf G^\prime$-bundle structure on $\mathscr
    G_s^\prime \to \mathscr C_s$ such that the induced
    \[
      \mathscr G_s^\prime  \longrightarrow \mathscr G_s^\prime/ \mathbf G
      \longrightarrow  \mathscr C_s \longrightarrow \{s\}
    \]
    is isomorphic to the restriction of \eqref{eq:G-prime-to-C-prime-to-C-to-S} to
    $s$, compatible with the $\mathbf G$ action on $\mathscr G_s^{\prime}$ and
    the $\mathbf G^\prime/\mathbf G$ action on $\mathscr C_s^\prime \cong \mathscr
    G_s^\prime/ \mathbf G$;
  \item
    an isomorphism $f_i: \mathscr G_{s}^\prime \to \mathscr G_{s}^\prime$ for each
    $i = 1 ,\ldots, n$,
    making the following diagram commute for each $g_0\in \mathbf G(\mathbb C)$
    \[
      \begin{tikzcd}
        \mathscr G^\prime_{s} \ar[r, "g_0"] \ar[d, "f_i"] & \mathscr
        G^\prime_{s}\ar[d,"f_i"] \ar[r] & \mathscr C_s^\prime \ar[d, "g_i"]\\
        \mathscr G^\prime_{s} \ar[r, "g_ig_0g_i^{-1}"] & \mathscr
        G^\prime_{s}\ar[r] & \mathscr C_s^\prime
      \end{tikzcd}.
    \]
  \end{enumerate}
  The bijection is given by letting $g_i$ act by $f_i$.

  It suffices to show that the set of such $f_i$ is bounded.
  Such an $f_i$ is an isomorphism of principal $\mathbf G$-bundles over $\mathscr
  C_s^{\prime}$
  \[
    \mathscr G^\prime_{s} \longrightarrow g_i^*(\mathscr G^\prime_{s})^{g_i}
  \]
  where $(\mathscr G^\prime_{s})^{g_i}$ is $\mathscr G^\prime_{s}$ with the
  original $\mathbf G$-action conjugated by $g_i$. Since the set of isomorphisms between two
  bounded famillies of principal $\mathbf G$-bundles is bounded, the proof is complete.
\end{proof}

\section{The boundedness of stable LG-quasimaps}
\label{sec:main-proof}
We have finished the digression on the generality of principal bundles.
We now apply the general theory in the previous section to prove
Theorem~\ref{thm:main}. So we turn back to the notation in Section~\ref{sec:intro}

Let $\Gamma_0 \subset \Gamma$ be the identity component and set $G_0 = G\cap
\Gamma_0$. Then $V^{\mathrm{s}}(\theta) = V^{\mathrm{s}}(\theta|_{G_0})$.
Fix a Borel subgroup $B\subset \Gamma_0$ and let $P_1 = B, \ldots, P_k \subset
\Gamma_0$ be the parabolic subgroups containing $B$.
Note that in applying the general theory, the $\mathbf G$ in
Lemma~\ref{lem:bounding-by-degree-of-L}
will be $\Gamma_0$, while the $\mathbf G$ in
Proposition~\ref{prop:boundedness-semistable-G-bundles} will be the Levi
factor of some $P_i$.

\smallskip
The following lemma is the key, and will be proved in Section~\ref{sec:bounding-the-degrees}.
\begin{lemm}
  \label{lem:boundedness-after-reduction-2}
  For each $i = 1 ,\ldots, k$ and for any integers $d_1,d_2,g$ with $ g \geq 2$,
  there exists a finite subset $\Omega \subset \mathrm{Hom}(\widehat{P}_i, \mathbb
  Z)$, such that for any smooth projective curve $\mathscr C$ of genus $g$ and
any principal $P_i$-bundle
  $\mathscr Q$ on $\mathscr C$ satisfying
  \begin{itemize}
  \item $\mathscr Q$ is the canonical reduction of the principal
    $\Gamma_0$-bundle $\mathscr Q\times_{P_i} \Gamma_0$,
  \item $\deg_{\mathscr Q}(\vartheta|_{P_i}) = d_1$, $\deg_{\mathscr
      Q}(\epsilon|_{P_i}) = d_2$, and
  \item $\mathscr Q \times_{P_i} V \to \mathscr C$ admits a section $\sigma$
    such that $\sigma^{-1}(\mathscr Q\times_{P_i} V^{\mathrm{s}}(\theta)) \neq \emptyset$,
  \end{itemize}
  we have $\deg_{\mathscr Q} \in \Omega$.
\end{lemm}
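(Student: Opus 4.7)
\medskip
\noindent
\emph{Proof plan.}
The plan is to bound the function $\deg_{\mathscr Q} : \widehat{P}_i \to \mathbb Z$. Since $\widehat{P}_i$ is finitely generated, it suffices to bound the values of $\deg_{\mathscr Q}$ on a fixed generating set; equivalently, I will exhibit a bounded rational polytope in $\mathrm{Hom}(\widehat{P}_i, \mathbb Q)$ that contains every $\deg_{\mathscr Q}$ arising from the hypotheses. The polytope will be cut out by three kinds of linear constraints: two affine equalities coming from $d_1, d_2$, a family of strict positivity inequalities coming from the canonical-reduction hypothesis, and a family of opposite-sign inequalities coming from the GIT-stability of $\sigma$.

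First I would record the constraints from the canonical reduction. By condition~(2) of Definition-Proposition~\ref{def:canonical-reductions}, every nontrivial character of $P_i$ lying in the cone $C_i^+ \subset \widehat{P}_i \otimes \mathbb Q$ generated by the simple roots appearing in the unipotent radical $U_i$ pairs strictly positively with $\deg_{\mathscr Q}$. These inequalities pin $\deg_{\mathscr Q}$ down in all directions transverse to the center of the Levi $L_i$.

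Second I would extract opposing inequalities from the section. Embed $V$ as a closed $\Gamma$-invariant subscheme of a finite-dimensional $\Gamma$-representation $W$ (possible since $\Gamma$ is reductive and $V$ is affine), and decompose $W$ under the maximal torus $T \subset P_i$ as $W = \bigoplus_{\lambda} W_\lambda$. Although $P_i$ need not preserve this decomposition, one can choose a filtration of $W$ by $P_i$-subrepresentations whose graded pieces are weight-homogeneous $L_i$-modules. The $P_i$-equivariant map $\mathscr Q \to V \subset W$ provided by $\sigma$ then yields, for each weight $\lambda$ in some support set $\Lambda \subset \widehat{T}$, a nonzero global section of the vector bundle associated to the Levi reduction $\mathscr Q \times_{P_i} L_i$ and the weight $\lambda$. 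Nonvanishing of such a section forces a degree inequality of the form $\deg_{\mathscr Q}(\chi_\lambda) \geq 0$ for the corresponding character $\chi_\lambda$ of $P_i$.

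The core step, and the one I expect to be the main obstacle, is to show that the support set $\Lambda$ is large enough: concretely, that $\theta$ lies in the relative interior of the convex hull of $\Lambda$ in $\widehat{T} \otimes \mathbb Q$. This is the natural generalization of the inequality $\deg(\mathscr L_2) \geq 0$ in the motivating $GL_2$ example, and it is the place where the Hilbert--Mumford/King criterion for $\theta$-stability on $V$ enters; it is the content of the promised Lemmas~\ref{lem:positivity-of-quotient} and \ref{lem:convex-hull-contains-zero-in-interior}. Once the convex-hull statement is in hand, the cone $C_i^+$, the nonnegativity inequalities indexed by $\Lambda$, and the two affine equations $\deg_{\mathscr Q}(\vartheta|_{P_i}) = d_1$ and $\deg_{\mathscr Q}(\epsilon|_{P_i}) = d_2$ jointly cut out a bounded rational polytope in $\mathrm{Hom}(\widehat{P}_i, \mathbb Q)$. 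Its intersection with the integer lattice $\mathrm{Hom}(\widehat{P}_i, \mathbb Z)$ is finite, and we take $\Omega$ to be this intersection.
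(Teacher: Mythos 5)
Your plan follows essentially the same route as the paper's proof. You correctly identify the three sources of linear constraints on $\deg_{\mathscr Q}$ --- the fixed degrees $d_1, d_2$ of $\vartheta$ and $\epsilon$; the strict positivity of $\deg_{\mathscr Q}$ on characters coming from (the $\Delta_L^\perp$-projections of) simple roots, using Definition-Proposition~\ref{def:canonical-reductions}(2); and nonnegativity inequalities extracted from the section $\sigma$ by filtering a $P_i$-representation so that the graded pieces are $L_i$-modules and applying semistability of the Levi reduction. You also correctly locate the place where the Hilbert--Mumford/King criterion must enter. This is exactly the structure of Lemmas~\ref{lem:bounding-theta_i}--\ref{lem:convex-hull-contains-zero-in-interior} in the paper.

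The one imprecision worth flagging is the statement of your ``core step.'' You propose to show that $\theta$ lies in the relative interior of the convex hull of the support $\Lambda \subset \widehat T$ of $x$. The paper instead shows that the convex hull of the finite set $K_{\mathscr Q, \sigma}$ of \emph{constraint characters} contains $0$ in the interior of $\widehat P_{\mathbb R}$, which is the formulation Lemma~\ref{lem:finiteness-Omega-K-B} needs. The gap between the two is the point your sketch glosses over: a nonvanishing section of an irreducible $L_i$-summand does not produce a nonnegativity constraint on $\alpha$ itself but on some $\alpha' \in \mathbb Z_{\geq 1}\alpha + \mathbb Z_{\geq 0}\Phi^- + \mathbb Z\Delta_L$ (the highest weight gets shifted by roots of $P_i$). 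It is precisely the canonical-reduction positivity on simple roots that lets one control this shift and push the inequality back from $\alpha'$ to $\alpha$ in the dual picture (see the proof of Lemma~\ref{lem:convex-hull-contains-zero-in-interior}: the functional $\varphi$ kills $\widehat\Gamma_{\mathbb R}$ and $\Delta_L$ and is nonnegative on simple roots, hence nonpositive on the shift lattice, hence nonnegative on the support of $x$). So the canonical-reduction inequalities are not ``pinning down all directions transverse to the center of $L_i$'' as you say --- they give only one-sided bounds --- but they play the essential bookkeeping role of relating the section-induced constraints to the support of $x$ so that King's criterion can be invoked. With that adjustment your plan reproduces the paper's argument.
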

Assuming this, we continue to prove the boundedness for stable LG-quasimaps.
Combinging Lemma~\ref{lem:bounding-by-degree-of-L-disconnected-case} and the
above lemma, it is easy to obtain
\begin{lemm}
  \label{lem:boundedness-bundles-non-orbifold}
  Let $\pi: \mathscr C \to S$ be a family of smooth projective curves of genus
  $g\geq 2$ over a finite-type $\mathbb C$-scheme $S$.
  Then for any $d_1, d_2 \in \mathbb Z$, the set
  \begin{equation}
    \label{eq:underlying-Gamma-bundle-on-smooth-curves}
    \Bigg\{\mathscr P_{s} \in \mathrm{Bun}_{\Gamma} (\mathscr C_{s}) \Bigg|
    \begin{aligned}
      & s \in S(\mathbb C), \deg_{\mathscr P_s}(\vartheta) = d_1, \deg_{\mathscr
        P_s}(\epsilon) = d_2,
        \mathscr P_{s}\times_{\Gamma} V \text{ } \\
&
  \text{admits a section }
        \sigma \text{
        s.t. }\sigma^{-1}(\mathscr P_{s}\times_{\Gamma} V^{\mathrm{s}}(\theta)) \neq \emptyset
    \end{aligned}
    \Bigg\}
  \end{equation}
  is bounded.
\end{lemm}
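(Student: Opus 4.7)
The plan is to apply Lemma~\ref{lem:bounding-by-degree-of-L-disconnected-case} with $\mathbf G^\prime = \Gamma$ and $\mathbf G = \Gamma_0$. Accordingly, it suffices to show that as $\mathscr P_s$ runs over the set~\eqref{eq:underlying-Gamma-bundle-on-smooth-curves} and $\tilde{\mathscr C}_s$ runs over the connected components of $\mathscr P_s/\Gamma_0$, the set
\[
  \bigl\{\deg_{(\mathscr P_s|_{\tilde{\mathscr C}_s})^{\mathrm{can}}}\bigr\}
  \subset \coprod_{i=1}^k \mathrm{Hom}(\widehat{P}_i, \mathbb Z)
\]
is finite. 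This finiteness I would extract from Lemma~\ref{lem:boundedness-after-reduction-2}, applied after restriction to each component.

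Fix such a component $\tilde{\mathscr C}_s$ and let $n$ be the degree of the étale cover $\tilde{\mathscr C}_s \to \mathscr C_s$; since this cover is a piece of the finite étale $\Gamma/\Gamma_0$-torsor $\mathscr P_s/\Gamma_0 \to \mathscr C_s$, we have $n \leq |\Gamma/\Gamma_0|$. Write $\mathscr Q$ for the canonical reduction of the $\Gamma_0$-bundle $\mathscr P_s|_{\tilde{\mathscr C}_s}$ to some $P_i$. Three verifications are needed. First, Riemann--Hurwitz gives $g(\tilde{\mathscr C}_s) = n(g-1) + 1 \geq g \geq 2$, so the genus hypothesis of Lemma~\ref{lem:boundedness-after-reduction-2} is met. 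Second, the identification $\mathscr Q \times_{P_i} \mathbb C_{\vartheta|_{P_i}} = \mathscr P_s|_{\tilde{\mathscr C}_s} \times_{\Gamma_0} \mathbb C_{\vartheta|_{\Gamma_0}}$ exhibits this line bundle as the pullback of $\mathscr P_s \times_\Gamma \mathbb C_\vartheta$ along $\tilde{\mathscr C}_s \to \mathscr C_s$, so $\deg_{\mathscr Q}(\vartheta|_{P_i}) = n d_1$, and likewise $\deg_{\mathscr Q}(\epsilon|_{P_i}) = n d_2$; these take only finitely many values as $n$ varies. Third, using $V^{\mathrm{s}}(\theta) = V^{\mathrm{s}}(\theta|_{G_0})$ together with $\mathscr Q \times_{P_i} V = \mathscr P_s|_{\tilde{\mathscr C}_s} \times_{\Gamma_0} V$, the pullback of $\sigma$ along $\tilde{\mathscr C}_s \to \mathscr C_s$ is a section of $\mathscr Q \times_{P_i} V \to \tilde{\mathscr C}_s$ whose preimage of the stable locus is non-empty (take any preimage of a point where $\sigma$ hits the stable locus).

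Lemma~\ref{lem:boundedness-after-reduction-2} then applies to each of the finitely many triples $(n d_1, n d_2, i)$ and yields a finite subset of $\mathrm{Hom}(\widehat{P}_i, \mathbb Z)$ containing the corresponding $\deg_{\mathscr Q}$; their union is the desired finite set. The real work here is all concentrated in Lemma~\ref{lem:boundedness-after-reduction-2}, which is where the GIT-stability hypothesis is actually leveraged and which is deferred to Section~\ref{sec:bounding-the-degrees}; once it is granted, the remaining steps above amount to routine bookkeeping, tracking degrees and sections through the connected components of the étale cover $\mathscr P_s/\Gamma_0$.
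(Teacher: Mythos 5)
Your proof is correct and follows exactly the same route as the paper: restrict the $\Gamma_0$-bundle $\mathscr P_s$ to a connected component of $\mathscr P_s/\Gamma_0$, take the canonical reduction, invoke Lemma~\ref{lem:boundedness-after-reduction-2} to get finiteness of the degrees, and conclude via Lemma~\ref{lem:bounding-by-degree-of-L-disconnected-case}. You have merely made explicit a few bookkeeping points (the Riemann--Hurwitz genus bound, the bound $n\leq|\Gamma/\Gamma_0|$, and the pullback identification for the $V$-bundle) that the paper leaves implicit.
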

\begin{proof}

  Let $\mathscr P_{s}$ be in the above set, and let $\tilde{\mathscr C}^{\prime}_s \subset
  \mathscr C^{\prime}_s := \mathscr P_{s} / \Gamma_0$ be a connected component.
  Form the principal $\Gamma_0$-bundle
  $\tilde{\mathscr P}^\prime_{s} = \mathscr P_{s} \times_{\mathscr C_{s}}
  \tilde{\mathscr C}^{\prime}_s$  over $\tilde{\mathscr C}^{\prime}_s$ and let
  $\mathscr Q_s$ be its canonical reduction to some (unique) $P_i$. Then we have
  $\deg_{\mathscr Q_s} (\vartheta|_{P_i}) = [\tilde{\mathscr C}^{\prime}_s:
  {\mathscr C}_s]\cdot d_1$ and $\deg_{\mathscr Q_s} (\epsilon|_{P_i}) =
  [\tilde{\mathscr C}^{\prime}_s: {\mathscr C}_s]\cdot d_2$, where
  $[\tilde{\mathscr C}^{\prime}_s: {\mathscr C}_s]$ is the degree of the finite
  \'etale cover $\tilde{\mathscr C}^{\prime}_s \to {\mathscr C}_s$. The section
  $\sigma$ pulls back to a section of $\mathscr Q_s \times_{P_i} V$ that takes the
  generic of $\tilde{\mathscr C}^{\prime}_s$ to $\mathscr Q_s \times_{P_i}
  V^{\mathrm{s}}(\theta)$. Hence
  by Lemma~\ref{lem:boundedness-after-reduction-2}, all the $\deg_{\mathscr
    Q_s}$ form a finite subset of $\mathrm{Hom}(\widehat P_i,\mathbb Z)$. Hence
  by Lemma~\ref{lem:bounding-by-degree-of-L-disconnected-case}, the set
  \eqref{eq:underlying-Gamma-bundle-on-smooth-curves} is
  bounded.
\end{proof}

To generalize the above lemma to twisted curves of any genus, we need the
following lemma.
\begin{lemm}
  \label{lem:covering}
  Let $\pi: \mathscr C \to S$ be a family of smooth twisted curves over a
  finite-type $\mathbb C$-scheme $S$.
  Then up to base change to an \'etale cover of $S$, there exists a family of smooth
  projective curves $\mathscr C^\prime \to S$ with connected fibers of genus
  $\geq 2$, together with a faithfully flat $S$-morphism $\mathscr C^\prime
  \to \mathscr C$.
\end{lemm}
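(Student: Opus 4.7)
The approach is to build $\mathscr C'$ in two stages: first kill the stacky structure of $\mathscr C$ by a ramified cover of its coarse moduli space, and then, if the resulting genus is too small, take a further étale cover to bump it up. After an étale base change on $S$, we may assume that the coarse moduli map $\pi: \mathscr C \to C$ exhibits $C \to S$ as a family of smooth projective curves, and that the stacky locus of $\mathscr C$ is a disjoint union of sections $\sigma_1, \ldots, \sigma_n: S \to \mathscr C$ where $\sigma_i$ has constant stabilizer $\mu_{r_i}$ for some fixed integer $r_i \ge 1$. Write $p_i := \pi\circ\sigma_i$.

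The key step is to produce a finite flat morphism $f: C' \to C$ between families of smooth projective curves over $S$ such that at each section $p_i$, the ramification index of $f$ is divisible by $r_i$. One concrete way is the following. Embed $C \hookrightarrow \mathbb P^N_S$ using a sufficiently positive power of $\omega_{C/S}$; project generically to a $\mathbb P^1_S$ so that the sections $p_i$ map to distinct sections $q_i$; take the cyclic cover of $\mathbb P^1_S$ of degree $r := \mathrm{lcm}(r_1,\ldots,r_n)$ associated to a section of $\sO_{\mathbb P^1_S}(rm)$ vanishing to order $r$ at each $q_i$ (for some large $m$), with extra ramification at enough other points so that the cover is connected and has large genus; then pull back along $C \to \mathbb P^1_S$. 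After possibly further étale base change on $S$ to realize the auxiliary choices as global sections, Bertini in the relative setting guarantees that $C'\to S$ is a smooth family of projective curves, and Riemann--Hurwitz gives control on the genus.

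Set $\mathscr C' := C' \times_C \mathscr C$, which is faithfully flat over $\mathscr C$ by base change. To see $\mathscr C'$ is a scheme, work étale-locally near a preimage of $p_i$: there $\mathscr C \cong [\mathrm{Spec}(A[x])/\mu_{r_i}]$ with scalar action, $C \cong \mathrm{Spec}(A[u])$ with $u = x^{r_i}$, and $C'$ has a local parameter $y$ with $u = y^{kr_i}$ for some $k\ge 1$; the fiber product then has the $\mu_{r_i}$-invariant relation $x = \zeta \cdot y^k$ for some $r_i$-th root of unity $\zeta$, which trivializes the quotient and realizes $\mathscr C'$ locally as $\mathrm{Spec}(A[y])$. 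Thus $\mathscr C'$ is a scheme, and a family of smooth projective curves over $S$. Finally, if the fibers of $\mathscr C'\to S$ happen to have genus $\le 1$, compose with a connected étale cover of degree $\ge 3$; this exists for any smooth curve of genus $\ge 1$ and raises the genus by Riemann--Hurwitz. We can always arrange genus $\ge 1$ already in the construction by taking $m$ and the auxiliary ramification data sufficiently large.

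The main obstacle will be the second step: constructing $f: C' \to C$ as a family of smooth projective curves with the prescribed ramification, uniformly over $S$. This requires relative Bertini-type arguments and careful manipulation of cyclic covers in families, and one may need to pass to further étale covers of $S$ along the way in order to make the needed sections exist globally.
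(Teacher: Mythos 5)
The paper's own proof of this lemma is a one-sentence citation of Lemma~6.7 of \cite{mixed-fields-main}, plus the observation that Riemann--Hurwitz lets one raise the genus, so there is little to compare against directly. Your proposal is a self-contained constructive route: project the coarse moduli $C$ to $\mathbb P^1_S$, build a cyclic cover $D \to \mathbb P^1_S$ with prescribed ramification over the images $q_i$ of the stacky sections, set $C' = C\times_{\mathbb P^1_S} D$, and observe that $\mathscr C' := C'\times_C \mathscr C$ is a scheme because the ramification kills the gerbe structure. This is a reasonable and standard strategy, and your local computation at a stacky point (the factorization of $A[x,y]/(x^{r_i}-y^{kr_i})$ into $r_i$ branches permuted freely and transitively by $\mu_{r_i}$, so the quotient stack is $\mathrm{Spec}\,A[y]$) is essentially correct, as is the final genus-increasing \'etale cover.

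However, there is a concrete error in the cyclic-cover step. For the degree-$r$ cyclic cover $D\to\mathbb P^1$ defined by $s\in H^0(\sO(rm))$, the local model over a zero of $s$ of multiplicity $a$ is $\{t^r = u^a\}$, which has $\gcd(r,a)$ analytic branches, each with ramification index $r/\gcd(r,a)$ over $u=0$. Taking $s$ to vanish to order $r$ at $q_i$, as you propose, gives $\gcd(r,r)=r$ branches with ramification index $r/r=1$: the cover is unramified over $q_i$ (and $D$ is nodal there, so not even smooth). This is the opposite of what you need. For $C'\times_C\mathscr C$ to be a scheme you need the ramification index of $C'\to C$ at every point over $p_i$ to be a multiple of $r_i$; since the projection $C\to\mathbb P^1_S$ is chosen unramified at $p_i$, this pulls back from the ramification of $D\to\mathbb P^1_S$ over $q_i$. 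So you want $s$ to have a \emph{simple} zero at $q_i$ (giving ramification $r$, divisible by $r_i$), or more generally multiplicity coprime to $r_i$; simple zeros also keep $D$ smooth and, being coprime to $r$, guarantee connectedness of the cyclic cover. With that correction the strategy works, modulo the relative Bertini issues in families that you explicitly flag, which are technical but standard (and may require further \'etale base change on $S$, as you note).
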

\begin{proof}
  This is basically Lemma~6.7 of \cite{mixed-fields-main}, note that by
  Riemann-Hurwitz the genus of $\mathscr C^\prime$ can be made arbitrarily large.
\end{proof}

\begin{proof}[Proof of Theorem~\ref{thm:main}]
  The set of
  isomorphisms between two given bounded families of
  principal bundles on projective curves is bounded.
  Hence the lemma follows immediately from
  Lemma~\ref{lem:boundedness-bundles-non-orbifold} by a standard descent argument using
  Lemma~\ref{lem:covering}.
\end{proof}

\section{Proof for Lemma~\ref{lem:boundedness-after-reduction-2}}
\label{sec:bounding-the-degrees}
From now on we work with a single smooth projective curve $\mathscr C$ of fixed genus $g \geq 2$.
Recall that $\Gamma_0 \subset \Gamma$ is the identity component and $G_0 = G\cap
\Gamma_0 = \ker(\epsilon|_{\Gamma_0})$, and
the stable locus $V^{\mathrm{s}}(\theta)$ is nonempty.
If $V \hookrightarrow W$ is an equivariant embedding into a $\Gamma$-representation, then
we have $V^{\mathrm{s}}(\theta) = W^{\mathrm{s}}(\theta) \cap V$. Replacing
$V$ by $W$, we may assume that $V$ itself is a $\Gamma$-representation.
Since the lemma only involves $\Gamma_0$ and $G_0$, without loss of generality
we may assume $\Gamma = \Gamma_0$ and $G = G_0$.
We fix one $i = 1 ,\ldots, k$ and set $P = P_i$.

The strategy is to define $\Omega$ via the following lemma.
For
\[
  K \subset \widehat P_{\mathbb R}:= \widehat{P} \otimes_{\mathbb Z} \mathbb R
  \quad \text{and} \quad \mathcal B \in \mathbb R,
\]
we define
\[
  \Omega(K, \mathcal B) := \{d \in \mathrm{Hom}(\widehat P, \mathbb Z) \mid d(\chi) \geq
  \mathcal B, ~\forall \chi \in K\}.
\]
\begin{lemm}
  \label{lem:finiteness-Omega-K-B}
  Suppose the interior of the convex hull of $K$ contains
  $0$, then $\Omega(K, \mathcal B)$ is finite.
\end{lemm}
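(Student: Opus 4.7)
The plan is to pass from the integral condition to the ambient real vector space and use a compactness argument. Since $P$ is connected (being a parabolic subgroup of the connected reductive group $\Gamma_0$), its character group $\widehat P$ is a free abelian group of finite rank $r$, so $\mathrm{Hom}(\widehat P, \mathbb Z)$ embeds as a full-rank lattice in $\mathrm{Hom}(\widehat P, \mathbb R) \cong \mathbb R^r$. Set
\[
  C := \{\phi \in \mathrm{Hom}(\widehat P, \mathbb R) \mid \phi(\chi) \geq \mathcal B,\ \forall \chi \in K\}.
\]
Then $\Omega(K, \mathcal B) = C \cap \mathrm{Hom}(\widehat P, \mathbb Z)$, and since a bounded subset of a lattice is finite, it suffices to prove that $C$ is bounded.

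To prove boundedness of $C$, I would argue by contradiction. Suppose there exist $\phi_n \in C$ with $\|\phi_n\| \to \infty$ in some fixed norm on $\mathrm{Hom}(\widehat P, \mathbb R)$. After passing to a subsequence, the normalizations $\bar\phi_n := \phi_n / \|\phi_n\|$ converge on the compact unit sphere to some $\bar\phi$ of unit norm. Dividing the defining inequality $\phi_n(\chi) \geq \mathcal B$ by $\|\phi_n\|$ and letting $n \to \infty$ yields $\bar\phi(\chi) \geq 0$ for every $\chi \in K$, and hence, by $\mathbb R_{\geq 0}$-linearity and convex combinations, $\bar\phi \geq 0$ on the entire convex hull of $K$.

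By hypothesis, this convex hull contains an open neighborhood $U$ of $0$ in $\widehat P_{\mathbb R}$. For any nonzero $w \in \widehat P_{\mathbb R}$, both $\pm \delta w$ lie in $U$ for sufficiently small $\delta > 0$, so $\bar\phi(\delta w) \geq 0$ and $\bar\phi(-\delta w) \geq 0$, forcing $\bar\phi(w) = 0$. Thus $\bar\phi \equiv 0$, contradicting $\|\bar\phi\| = 1$.

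The argument is elementary convex analysis, so I do not expect a genuine obstacle. The only point that truly uses the hypothesis is that we take \emph{topological} (as opposed to relative) interior in $\widehat P_{\mathbb R}$: this is what guarantees that any supporting functional nonnegative on a neighborhood of $0$ must vanish identically, and is also what rules out trivial counterexamples like $K = \{\chi_0, -\chi_0\}$ with $\chi_0$ spanning a proper subspace.
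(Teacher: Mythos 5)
Your proof is correct. The paper omits the proof of this lemma, stating only that it is elementary, so there is no argument in the paper to compare against; your compactness argument --- passing to the real cone $C$, normalizing a putatively unbounded sequence $\phi_n \in C$, extracting a subsequential limit $\bar\phi$ on the unit sphere, observing that $\bar\phi \geq 0$ on $K$ (hence on $\mathrm{conv}(K)$), and using that $\mathrm{conv}(K)$ contains a full-dimensional neighborhood of $0$ to force $\bar\phi \equiv 0$ --- is the standard way to establish this and is complete. Your closing remark correctly pinpoints where the hypothesis enters: it is the \emph{topological} interior in all of $\widehat P_{\mathbb R}$ that makes $\bar\phi$ vanish on a spanning open set, which is why $C$ is bounded and hence meets the lattice $\mathrm{Hom}(\widehat P, \mathbb Z)$ in only finitely many points.
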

\begin{proof}
  This is elementary and we omit the proof.
\end{proof}

We will find finitely many $K_i$ such that for
sufficiently negative $\mathcal B$,
\begin{equation}
  \label{eq:Omega}
  \Omega := \bigcup \Omega(K_i, \mathcal B)
\end{equation}
contains each $\deg_{\mathscr Q}$ in
Lemma~\ref{lem:boundedness-after-reduction-2}. By the above lemma, the proof of
Lemma~\ref{lem:boundedness-after-reduction-2} will be complete once we show that
the interior of the convex hull of each $K_i$ contains $0$.

\smallskip
To begin with, we first look at those $\chi \in \widehat{\Gamma}$. This step is
similar to the proof in the abelian case in \cite{mixed-fields-main}.

We first choose $\theta_i \in \widehat{G}$, $i=1 ,\ldots, n$, such that
\begin{itemize}
\item $V^{\mathrm{s}}(\theta) \subset V^{\mathrm{ss}}(\theta_i)$, where
\item $\theta \in \sum_{i} \mathbb R_{> 0} \theta_i$,
\item $\theta_1 ,\ldots, \theta_n$ span $\widehat{G} \otimes \mathbb R$.
\end{itemize}
This is possible because any $\theta_i$ such that the ray $\mathbb R_{\geq
  0}\theta_i$ is sufficiently close to $\mathbb R_{\geq 0}\theta$ satisfies the
first requirement.

Note that by the exact sequence \eqref{eq:extension-of-groups},
$\Gamma$ is also reductive and the restriction map $\widehat{\Gamma} \to
\widehat{G}$ is surjective.
We pick a lift $\vartheta_i\in \widehat{\Gamma}$ of
$\theta_i$, for each $i = 1 ,\ldots, n$.
\begin{lemm}
  \label{lem:bounding-theta_i}
  For $\chi = -\vartheta, \epsilon, -\epsilon, \vartheta_1 ,\ldots,
  \vartheta_n$,
  \[
    \deg_{\mathscr Q}(\chi) := \deg (\mathscr Q\times_{P} \mathbb C_{\chi|_{P}})
  \]
  is uniformly bounded from below for those $\mathscr Q$ in
  Lemma~\ref{lem:boundedness-after-reduction-2}.
\end{lemm}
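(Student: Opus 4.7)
For $\chi\in\{-\vartheta,\epsilon,-\epsilon\}$, the hypotheses of Lemma~\ref{lem:boundedness-after-reduction-2} force $\deg_{\mathscr Q}(\chi)$ to equal the constant $-d_1$, $d_2$, or $-d_2$ respectively, so these cases are trivial. The substance of the lemma is the case $\chi=\vartheta_i$, which I handle below by converting the GIT semistability of the image of $\sigma$ into the non-vanishing of a specific global section on $\mathscr C$.

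Since $\Gamma$ is connected and contains $G$ normally, conjugation acts trivially on $\widehat G$; in particular $V^{\mathrm{s}}(\theta)$ and $V^{\mathrm{ss}}(\theta_i)$ are $\Gamma$-invariant. Combined with $V^{\mathrm{s}}(\theta)\subset V^{\mathrm{ss}}(\theta_i)$ and the irreducibility of $\mathscr C$, the section $\sigma$ sends the generic point $\eta\in\mathscr C$ into $\mathscr Q\times_P V^{\mathrm{ss}}(\theta_i)$. By the definition of GIT semistability and the quasi-compactness of the Noetherian open $V^{\mathrm{ss}}(\theta_i)$, I fix finitely many $G$-semi-invariants $f_{i,j}\in k[V]^{G}_{\theta_i^{n_{i,j}}}$ whose non-vanishing loci cover $V^{\mathrm{ss}}(\theta_i)$; at least one $f_{i,j}$ is then nonzero at a local lift of $\sigma(\eta)$ to $V$.

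The next step refines $f_{i,j}$ to a $\Gamma$-equivariant morphism of known character. The triviality of the $\Gamma$-action on $\widehat G$ shows that the $G$-isotypic subspace $k[V]^{G}_{\theta_i^{n_{i,j}}}\subset k[V]$ is stable under $\Gamma$, with $G$ acting by $\theta_i^{n_{i,j}}$; the residual $\Gamma/G\cong\mathbb G_m$-action decomposes it as a finite direct sum $\bigoplus_{a\in A_{i,j}}k[V]^{\Gamma}_{\vartheta_i^{n_{i,j}}\epsilon^{a}}$ with $A_{i,j}\subset\mathbb Z$ finite. Writing $f_{i,j}=\sum_a f_{i,j,a}$ accordingly, at least one weight component $f_{i,j,a_\ast}$ is nonzero at $\sigma(\eta)$, and as a genuine $\Gamma$-equivariant morphism $V\to \mathbb C_{\vartheta_i^{n_{i,j}}\epsilon^{a_\ast}}$ it induces, after composing with $\sigma$, a nonzero global section of the line bundle $\mathscr Q\times_P \mathbb C_{(\vartheta_i^{n_{i,j}}\epsilon^{a_\ast})|_{P}}$ on $\mathscr C$. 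Non-vanishing forces
\[
n_{i,j}\deg_{\mathscr Q}(\vartheta_i) + a_\ast\, d_2 \;\ge\; 0,
\]
so $\deg_{\mathscr Q}(\vartheta_i) \ge \min_{j,\,a\in A_{i,j}}(-a d_2/n_{i,j})$, a uniform bound depending only on the data $(V,\theta_i,\vartheta_i,\epsilon,d_2)$. The only non-routine ingredient is the $\Gamma$-stability and $\mathbb G_m$-weight decomposition of the $G$-isotypic subspace; once that is set up, the rest is bookkeeping with GIT semistability and the degree of associated line bundles.
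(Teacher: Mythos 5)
Your argument is correct, and since the paper itself disposes of this lemma by citing Lemma~6.8 of \cite{mixed-fields-main} rather than giving a proof, yours supplies a complete self-contained argument in the same spirit: for the trivial characters $-\vartheta,\pm\epsilon$ the degree is pinned by the hypotheses of Lemma~\ref{lem:boundedness-after-reduction-2}, and for $\vartheta_i$ one converts $\theta_i$-semistability of $\sigma(\eta)$ into a nonzero global section of a line bundle of known degree. The verification that $V^{\mathrm{s}}(\theta)$ and $V^{\mathrm{ss}}(\theta_i)$ are $\Gamma$-invariant, and that $k[V]^{G}_{\theta_i^{n_{i,j}}}$ is $\Gamma$-stable, via triviality of the conjugation action of the connected group $\Gamma$ on $\widehat G$, is exactly the right point to make, and it also justifies that the resulting $f_{i,j,a_\ast}$ is $P$-semi-invariant, so its nonvanishing at a local lift of $\sigma(\eta)$ is independent of the trivialization.

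One small imprecision: the weight decomposition of the full isotypic subspace $k[V]^{G}_{\theta_i^{n_{i,j}}}$ under the residual $\mathbb G_m$-action need not be a \emph{finite} direct sum (the space can be infinite dimensional). What is finite, and what you actually use, is the set of $\epsilon$-weights appearing in the decomposition of the finitely many chosen elements $f_{i,j}$, each of which lies in a finite-dimensional $\Gamma$-subrepresentation. With $A_{i,j}$ reinterpreted as the (finite) set of weights occurring in $f_{i,j}$, the bound $\deg_{\mathscr Q}(\vartheta_i)\ge\min_{j,a}(-a d_2/n_{i,j})$ is uniform in $(\mathscr C,\mathscr Q,\sigma)$ as claimed, since the $f_{i,j}$ and the $n_{i,j}\ge 1$ are fixed once and for all.
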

\begin{proof}
  This is identical to Lemma~6.8 of \cite{mixed-fields-main}.
  Note that for $\chi = -\vartheta, \epsilon$ or $-\epsilon$, $\deg_{\mathscr
    Q}(\chi)$ is constant.
\end{proof}

\begin{lemm}
  \label{lem:convex-hull-char-of-gamma}
  The convex hull of $\{-\vartheta, \epsilon, -\epsilon, \vartheta_1 ,\ldots,
  \vartheta_n\}$ in $\widehat{\Gamma}_{\mathbb R}$ contains the origin in its interior.
\end{lemm}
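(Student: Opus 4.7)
The plan is to exhibit $0$ as a convex combination with \emph{strictly} positive coefficients of all the listed points, and to check that these points span $\widehat{\Gamma}_{\mathbb R}$; together these two facts force $0$ to lie in the interior of the convex hull.

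First I would work downstairs, in $\widehat{G}_{\mathbb R}$. By the assumption $\theta \in \sum_i \mathbb R_{>0}\theta_i$, we have $\theta = \sum_i c_i\theta_i$ with each $c_i > 0$, so $\lambda_0(-\theta) + \sum_i \lambda_i \theta_i = 0$ where $\lambda_0 = 1/(1+\sum c_i) > 0$ and $\lambda_i = c_i/(1+\sum c_j) > 0$ and $\sum_{j=0}^n \lambda_j = 1$. This realizes $0$ as a convex combination of $-\theta, \theta_1,\ldots,\theta_n$ with all coefficients strictly positive.

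Next I lift to $\widehat{\Gamma}_{\mathbb R}$. The restriction map $\pi:\widehat{\Gamma}_{\mathbb R} \to \widehat{G}_{\mathbb R}$ is surjective (since $\Gamma$ is reductive) with kernel $\mathbb R\epsilon$, coming from the exact sequence \eqref{eq:extension-of-groups}. Since $\vartheta, \vartheta_1,\ldots,\vartheta_n$ lift $\theta,\theta_1,\ldots,\theta_n$, the element
\[
  \lambda_0(-\vartheta) + \sum_{i=1}^n \lambda_i \vartheta_i
\]
projects to $0$ in $\widehat{G}_{\mathbb R}$, hence equals $r\epsilon$ for some $r \in \mathbb R$. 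Pick any $u > \max(0,-r)$, so that both $u$ and $r+u$ are strictly positive. Then
\[
  \lambda_0(-\vartheta) + \sum_{i=1}^n \lambda_i \vartheta_i - (r+u)\epsilon + u(-\epsilon) \;=\; -(2u)\epsilon + r\epsilon - r\epsilon,
\]
and after rebalancing one sees
\[
  0 \;=\; \lambda_0(-\vartheta) + \sum_{i=1}^n \lambda_i \vartheta_i + (r+u)(-\epsilon) + u\epsilon \;-\; (r+2u)\cdot 0;
\]
dividing through by $1 + r + 2u > 0$ expresses $0$ as a convex combination of $\{-\vartheta,\epsilon,-\epsilon,\vartheta_1,\ldots,\vartheta_n\}$ with \emph{all} coefficients strictly positive. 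Hence $0$ lies in the relative interior of the convex hull.

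Finally I verify the spanning property: since $\theta_1,\ldots,\theta_n$ span $\widehat{G}_{\mathbb R}$ by hypothesis and $\pi(\vartheta_i)=\theta_i$, the elements $\vartheta_1,\ldots,\vartheta_n$ together with $\epsilon$ span $\widehat{\Gamma}_{\mathbb R}$ (any vector equals a combination of the $\vartheta_i$'s modulo $\mathbb R\epsilon$). Thus the affine span of the listed points is all of $\widehat{\Gamma}_{\mathbb R}$, so the relative interior of their convex hull coincides with its interior, and $0$ lies in it. The only real content is the positivity trick in the lift; the rest is bookkeeping, and I do not anticipate a serious obstacle.
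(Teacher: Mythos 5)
Your proposal is correct and takes essentially the same approach as the paper, which simply states that the claim ``follows immediately'' from the exact sequence $1 \to G \to \Gamma \to \mathbb G_m \to 1$ (giving $\ker(\widehat\Gamma_{\mathbb R} \to \widehat G_{\mathbb R}) = \mathbb R\epsilon$) together with the three defining properties of the $\theta_i$; you have merely written out the bookkeeping. The intermediate ``rebalancing'' display is unnecessarily convoluted but the final convex combination with strictly positive coefficients, combined with the spanning observation, is correct.
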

\begin{proof}
  The kernel of the surjection $\widehat{\Gamma}_{\mathbb R} \to
  \widehat{G}_{\mathbb R}$ is the span of $\epsilon$. The lemma follows
  immediately from the choices of those characters.
\end{proof}

We then look at those $\chi \in \widehat{P}$ that do not come from
$\widehat{\Gamma}$. Recall that $B\subset P$ is a Borel subgroup of $\Gamma =
\Gamma_0$. We fix a maximal torus $T \subset B$. Let $\mathfrak g, \mathfrak p,
\mathfrak b, \mathfrak t, \mathfrak l$ be the Lie algebras of $\Gamma, P, B, T,
L$, respectively. Let $\Phi \subset \widehat T$ be the roots of $\Gamma$, and
let $\Phi^+$, $\Phi^{-}$, $\Delta$ be the set of positive, negative, and simple
roots, respectively. Let $\Phi_{P}$ (resp.\ $\Phi_{L}$) be the set of nonzero weights
in $\mathfrak p$ (resp.\ $\mathfrak l$). Note that the composition of $T \to P
\to L$ is an embedding, via which $T$ is also a maximal torus of $L$. Thus
$\Phi_{L}$ is the roots of $L$, and we have
\[
  \Phi_{L} = \Phi_P \cap (- \Phi_P).
\]
We choose $\Phi_{L}^+ := \Phi_{L} \cap \Phi^+$ to be the positive roots of
$L$.
Then
\[
  \Delta_{L} := \Delta \cap \Phi_{L}
\]
are the simple roots of $L$.

Let $\mathbb R\Phi$ be the span of $\Phi$ in $\widehat{T}_{\mathbb R}$. We denote the Killing form on it by
$(\cdot ,\cdot )$. Let $\Delta^{\perp}_{L}$ be the orthogonal complement of
$\Delta_{L}$ in $\mathbb R\Phi$.
Then we have
\begin{equation}
  \label{eq:char-L}
  \widehat{T}_{\mathbb R} = \widehat{\Gamma} \oplus \mathbb R \Phi \quad \text{and} \quad
  \widehat{L}_{\mathbb R} \cong \widehat{\Gamma}_{\mathbb R} \oplus \Delta^{\perp}_{L}.
\end{equation}

\begin{lemm}
  \label{lem:positivity-projection-of-simple-root}
  Let $\pi: \mathbb R\Phi \to \Delta^\perp_L$ be the orthogonal projection, and
  $\alpha \in \Delta$ be a simple root.
  Then for any $\mathscr Q$ in Lemma~\ref{lem:boundedness-after-reduction-2},
  we have
  \[
    \deg_{\mathscr Q}(\pi(\alpha)) \geq 0.
  \]
\end{lemm}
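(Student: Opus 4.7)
The overall strategy is to reduce the claim to condition (2) in the definition of the canonical reduction (Definition-Proposition~\ref{def:canonical-reductions}): for every nontrivial character of $P$ that is a nonnegative linear combination of simple roots, the degree with respect to $\mathscr Q$ is strictly positive. Since $\deg_{\mathscr Q}$ extends $\mathbb R$-linearly to $\widehat P_{\mathbb R} = \widehat L_{\mathbb R}$, it suffices to show that $\pi(\alpha) \in \Delta^{\perp}_L \subset \widehat L_{\mathbb R}$ is either zero or a nonnegative rational combination of simple roots; applying condition (2) to a positive integer multiple will then conclude.

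I would split into two cases. If $\alpha \in \Delta_L$, then by the very definition of $\Delta^{\perp}_L$ we have $\alpha \perp \Delta^{\perp}_L$, so $\pi(\alpha) = 0$ and there is nothing to prove. The interesting case is $\alpha \in \Delta \setminus \Delta_L$. Writing
\[
  \pi(\alpha) = \alpha - \sum_{\beta \in \Delta_L} c_\beta \beta, \qquad c_\beta \in \mathbb Q,
\]
where the $c_\beta$ are uniquely determined by the orthogonality conditions $(\pi(\alpha), \gamma) = 0$ for all $\gamma \in \Delta_L$, I would solve the linear system $G c = b$, with $G_{\beta,\gamma} = (\beta, \gamma)$ and $b_\gamma = (\alpha, \gamma)$. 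Two standard properties of simple roots apply: distinct simple roots pair nonpositively under the Killing form, and the Gram matrix of the simple roots of the reductive Levi $L$ is positive definite with positive diagonal and nonpositive off-diagonal entries. Hence $b$ has nonpositive entries and $G$ is a Stieltjes (or $M$-)matrix; by a classical inverse-positivity fact, $G^{-1}$ has nonnegative entries, so $c = G^{-1} b$ satisfies $c_\beta \leq 0$ for every $\beta \in \Delta_L$. Consequently
\[
  \pi(\alpha) = \alpha + \sum_{\beta \in \Delta_L} (-c_\beta)\beta
\]
is a nontrivial nonnegative $\mathbb Q$-linear combination of simple roots, with strictly positive coefficient on $\alpha$. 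Clearing denominators to obtain an honest character of $P$ and applying condition (2) of the canonical reduction yields $\deg_{\mathscr Q}(\pi(\alpha)) > 0$, which a fortiori implies the desired nonnegativity.

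The only real obstacle is the minor linear-algebraic fact that a Stieltjes matrix has an entrywise nonnegative inverse; this is elementary and can either be cited or included as a one-line lemma. The conceptual content is simply the observation that the orthogonal projection of a simple root onto $\Delta^{\perp}_L$ remains a nonnegative combination of simple roots, which is precisely the shape of character to which condition (2) of the canonical reduction applies.
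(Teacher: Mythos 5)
Your proposal is correct and follows essentially the same route as the paper: reduce to showing that $\pi(\alpha)$ is a nonnegative rational combination of simple roots, then invoke condition~(2) of Definition-Proposition~\ref{def:canonical-reductions}. The only difference is how the intermediate nonnegativity is justified. The paper cites the Gram--Schmidt process (inductively stripping off $\Delta_L$ one root at a time, using at each step that distinct simple roots pair nonpositively), while you set up the Gram system $Gc=b$ on $\Delta_L$ and invoke the inverse-nonnegativity of the Stieltjes matrix $G$ to get $c\le 0$. These are two standard formulations of the same sign-structure fact about simple roots, and your version is arguably the more explicit of the two since it records the key property (positive-definite, nonpositive off-diagonal implies entrywise nonnegative inverse) as a self-contained lemma rather than leaving ``analyzing the Gram--Schmidt process'' to the reader. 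Both proofs are sound; no gap.
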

\begin{proof}
  Recall that for two distinct simple roots $\alpha_i, \alpha_j$,
  \[
    \frac{(\alpha_i, \alpha_j)}{(\alpha_i, \alpha_i)} \in \mathbb Q_{\leq 0}.
  \]
  Analyzing the Gram-Schmidt process, we see that $\pi(\alpha)$ is a linear
  combination of simple roots with $\mathbb Q_{\geq 0}$-coefficients.
  Hence some multiple of $\pi(\alpha)$ is a character of $L$ that is a
  nonnegative linear combination of simple roots. Recall that $\widehat{L} =
  \widehat{P}$.
  Then $\deg_{\mathscr Q}(\pi(\alpha)) \geq 0$ since $\mathscr Q$ is the
  canonical reduction of $\mathscr Q\times_{P} \Gamma$
  (Definition~\ref{def:canonical-reductions}).
\end{proof}
\begin{lemm}
  \label{lem:lower-bound-det-with-section}
  Let $W$ be an irreducible $L$-representation and $\mathscr Q^\prime$ be a
  semistable principal $L$-bundle on $\mathscr C$, if $\mathscr Q^\prime
  \times_{L} W$ admits a nontrivial section, then
  \[
    \textstyle
    \deg(\mathscr Q^\prime \times_{L} \bigwedge^{\mathrm{top}} W)) \geq 0.
  \]
\end{lemm}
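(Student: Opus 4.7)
The plan is to reduce the assertion to the elementary fact that a slope-semistable vector bundle admitting a nonzero global section has nonnegative degree, with the required semistability supplied by the Ramanan--Ramanathan theorem on associated bundles.

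First I would set $E := \mathscr Q^\prime \times_L W$, a vector bundle on $\mathscr C$ of rank $r = \dim W$. Since $\bigwedge^{\mathrm{top}} W$ is one-dimensional, it equals $\mathbb C_{\chi_W}$ for a character $\chi_W \in \widehat L$ (namely the sum of the $T$-weights of $W$), and $\det E = \mathscr Q^\prime \times_L \mathbb C_{\chi_W}$. Thus $\mu(E) = \deg(\det E)/r$, and the desired inequality $\deg(\mathscr Q^\prime \times_L \bigwedge^{\mathrm{top}} W) \geq 0$ becomes simply $\deg E \geq 0$.

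Next I would invoke Ramanan--Ramanathan's preservation of semistability: if a representation $\rho : L \to GL(W)$ is such that the connected component $Z^\circ(L)$ of the center of $L$ acts on $W$ by scalars, then the vector bundle associated to a semistable principal $L$-bundle via $\rho$ is slope-semistable. Because $W$ is irreducible and $L$ is reductive, Schur's lemma forces the whole center $Z(L)$, and in particular $Z^\circ(L)$, to act on $W$ through a single character. The hypothesis is therefore automatic and $E$ is slope-semistable.

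Finally, a nontrivial section $s \in H^0(\mathscr C, E)$ yields a sheaf injection $\mathscr O_{\mathscr C} \hookrightarrow E$ whose saturation is a line subbundle $M \subset E$ of the form $M \cong \mathscr O_{\mathscr C}(D)$ with $D$ the (effective) zero divisor of $s$, so $\deg M \geq 0$. Semistability of $E$ gives $\deg M = \mu(M) \leq \mu(E) = \deg(E)/r$, whence $\deg E \geq r \deg M \geq 0$, which is what we wanted. The only genuine obstacle in this plan is locating the Ramanan--Ramanathan statement in exactly the formulation above; everything else is a one-line slope computation.
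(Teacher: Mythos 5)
Your proposal is correct and follows essentially the same route as the paper: Schur's lemma to show the center of $L$ acts on $W$ by scalars, the Ramanan--Ramanathan theorem to deduce slope-semistability of the associated vector bundle, and then the standard fact that a slope-semistable bundle with a nonzero section has nonnegative degree.
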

\begin{proof}
  By Schur's lemma, the representation $L \to GL(W)$ takes the center of $L$
  into the center of $GL(W)$. Hence by
  \cite[Theorem~3.18]{remanan1984remarks}, the principal $GL(W)$-bundle $\mathscr Q^\prime
  \times_{L} GL(W)$ is semistable. Hence the vector bundle $\mathscr Q^\prime
  \times_{L} W$ is slope semistable. Thus having a nontrivial section implies
  that its degree is nonnegative.
\end{proof}

Let
\[
  V = \bigoplus_{\alpha \in \widehat{T}} V_{\alpha}
\]
be the decomposition of $V$
into weight spaces.
For $x \in V$, we write $x_{\alpha}$ for its component in $V_{\alpha}$.
\begin{lemm}
  \label{lem:positivity-of-quotient}
  Given any $(\mathscr Q, \sigma)$ in Lemma~\ref{lem:boundedness-after-reduction-2},
  suppose that $x\in V$ whose $P$-orbit is in the image of $\sigma$, and
  $x_\alpha \neq 0$ for some $\alpha\in \widehat{T}$.
  Let
  \[
    \Lambda = \mathbb Z_{\geq 0} \Phi^{-} + \mathbb Z \Delta_{L} \subset \widehat{T}.
  \]
  Then there exists some $\alpha^\prime \in (\mathbb Z_{\geq
    1} \alpha + \Lambda) \cap \widehat{P}$ such that
  \[
    \deg_{\mathscr Q}(\alpha^\prime)
    \geq 0.
  \]
  Moreover, as $\mathscr C, \mathscr Q, \sigma$ and $x$ vary, the
  $\alpha^\prime$ can be taken from a fixed finite set.
\end{lemm}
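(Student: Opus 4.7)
The plan is to realize $\alpha'$ as $\det W$ for an irreducible $L$-representation $W$ appearing as a subquotient in a $P$-filtration of $V$. Given such a $W$ with (a) $\tilde\sigma$ inducing a nontrivial section of $\mathscr Q\times_P W$ and (b) highest weight $\beta$ satisfying $\beta\in\alpha+\Lambda$, Lemma~\ref{lem:lower-bound-det-with-section} immediately gives $\deg_{\mathscr Q}(\det W)\geq 0$, and highest-weight theory yields $\det W=k\alpha+\lambda$ with $k=\dim W\geq 1$ and $\lambda\in\Lambda$.

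I first lift $\sigma$ to a $P$-equivariant morphism $\tilde\sigma\colon\mathscr Q\to V$ with $\tilde\sigma(q_0)=x$, and replace $V$ by the smallest $P$-subrepresentation $V^\sigma\subset V$ containing $\tilde\sigma(\mathscr Q)$. Let $\Phi_u:=\Phi_P\setminus\Phi_L$, the roots of the unipotent radical of $P$, and introduce the $P$-preorder on $\widehat T$ defined by $\alpha'\succeq_P\alpha''\iff\alpha'-\alpha''\in\mathbb Z_{\geq 0}\Phi_u+\mathbb Z\Phi_L$. For each class $[\mu]$ modulo $\mathbb Z\Phi_L$, the subspaces $V^\sigma_{\succeq_P[\mu]}$ and $V^\sigma_{\succ_P[\mu]}$ (summing weight spaces over the indicated classes) are $P$-submodules, and the quotient $V^\sigma_{\succeq_P[\mu]}/V^\sigma_{\succ_P[\mu]}$ is an $L$-representation (with $U_P$ acting trivially). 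Next, I pick $\mu$ to be a minimal weight of $V^\sigma$ satisfying $\mu\preceq_P\alpha$; such $\mu$ exists because $\alpha$ is itself a weight, and any such $\mu$ is automatically minimal in the whole weight set of $V^\sigma$. The relation $\mu\preceq_P\alpha$ gives $\mu-\alpha\in\mathbb Z_{\geq 0}(-\Phi_u)+\mathbb Z\Phi_L\subset\Lambda$, so the entire $L$-orbit $[\mu]$ lies in $\alpha+\Lambda$.

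By the minimality of $V^\sigma$ as a $P$-submodule, $\tilde\sigma$ cannot factor through the proper $P$-submodule $V^\sigma_{\succ_P[\mu]}$, yielding a nontrivial induced section of $\mathscr Q\times_P(V^\sigma/V^\sigma_{\succ_P[\mu]})$. When all weights of $V^\sigma$ are $\succeq_P\mu$, we have $V^\sigma_{\succeq_P[\mu]}=V^\sigma$, so this quotient is the semisimple $L$-module $V^\sigma_{\succeq_P[\mu]}/V^\sigma_{\succ_P[\mu]}$; decomposing into $L$-irreducibles $\bigoplus W_j$ and using the corresponding splitting of the associated bundle, at least one component $W=W_j$ receives a nontrivial section. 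The canonical-reduction hypothesis makes $\mathscr Q\times_P L$ a semistable $L$-bundle, and Lemma~\ref{lem:lower-bound-det-with-section} gives $\deg_{\mathscr Q}(\det W)\geq 0$. Since the highest weight $\beta$ of $W$ lies in $[\mu]\subset\alpha+\Lambda$ and the remaining weights of $W$ differ from $\beta$ by nonnegative integer combinations of $\Delta_L$, the computation $\det W=(\dim W)\alpha+\nu$ with $\nu\in\mathbb Z\Delta_L\subset\Lambda$ finishes the proof; setting $\alpha':=\det W$ works, and finiteness of the admissible $\alpha'$ follows from the finiteness of irreducible $L$-subquotients of the fixed $V$.

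The main obstacle is the case in which $V^\sigma$ has several minimal weight classes incomparable with $[\mu]$, so that $V^\sigma_{\succeq_P[\mu]}\subsetneq V^\sigma$ and the short exact sequence $0\to V^\sigma_{\succeq_P[\mu]}/V^\sigma_{\succ_P[\mu]}\to V^\sigma/V^\sigma_{\succ_P[\mu]}\to V^\sigma/V^\sigma_{\succeq_P[\mu]}\to 0$ need not split as a sequence of $P$-modules; one cannot then directly project the induced section onto the $[\mu]$-summand. The remedy is either to refine the filtration or to argue, via an analysis of the $U_P$-coinvariants of $V^\sigma/V^\sigma_{\succ_P[\mu]}$, that the section must have a nontrivial $[\mu]$-component; this combinatorial-representation-theoretic step is where I expect the bulk of the technical work to lie, while everything else reduces to Lemma~\ref{lem:lower-bound-det-with-section} and elementary highest-weight theory.
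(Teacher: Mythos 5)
Your proposal has a genuine gap, which you correctly identify and then leave unresolved. When $V^\sigma$ has weights that are $\succeq_P$-incomparable with $[\mu]$, the module $V^\sigma/V^\sigma_{\succ_P[\mu]}$ contains weight spaces with weights that need not lie in $\alpha+\Lambda$ at all; an $L$-irreducible subquotient carrying the nontrivial section could live entirely among those, and there is then no reason its determinant lands in $\mathbb Z_{\geq 1}\alpha+\Lambda$. Neither of your proposed remedies ("refine the filtration" or "analyze $U_P$-coinvariants") is carried out, and in fact there is no general reason the section has a nontrivial $[\mu]$-component. (A secondary slip: even in the favorable case the combination $\nu$ you extract lies in $\Lambda$ but not generally in $\mathbb Z\Delta_L$, since the highest weight $\beta$ of $W$ is only in $\alpha+\Lambda$, not in $\alpha+\mathbb Z\Delta_L$.)

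The paper avoids the obstruction by not isolating any congruence class $[\mu]$. It sets $V_+:=\bigoplus_{\beta\notin\alpha+\Lambda}V_\beta$ and observes that $V_+$ is a $P$-submodule: the complement of $\alpha+\Lambda$ is stable under translation by $\Phi_P$, because $\Lambda$ is closed under subtracting any element of $\Phi_P$ (since $-\Phi_P\subset\Phi^-\cup\Phi_L^+\subset\mathbb Z_{\geq 0}\Phi^-+\mathbb Z\Delta_L$). The quotient $V_P:=V/V_+$ has \emph{all} its weights in $\alpha+\Lambda$ by construction, and the image of $x$ in $V_P$ is nonzero because $x_\alpha\neq 0$, so $\sigma$ induces a nontrivial section of $\mathscr Q\times_P V_P$. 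One then filters $V_P$ by $P$-submodules with $U_P$-trivial graded pieces, picks a graded piece carrying a nontrivial section, decomposes it into $L$-irreducibles, and applies Lemma~\ref{lem:lower-bound-det-with-section} to an irreducible summand $V_{L,j}$ with a nontrivial section. The point is that $V_{L,j}$'s weights are automatically in $\alpha+\Lambda$ because they are weights of $V_P$ — no congruence-class bookkeeping or splitting of exact sequences is required — and summing them immediately gives $\det V_{L,j}\in\mathbb Z_{\geq 1}\alpha+\Lambda$. So the missing idea is to quotient out by the span of all "irrelevant" weight spaces \emph{before} filtering, rather than trying to project onto a single weight slice $[\mu]$; your reduction to $V^\sigma$ and the $\succeq_P$-machinery are then unnecessary.
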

\begin{proof}
  First note that $\Phi_P$ is generated by $\Phi^+$ and $\Delta_L$.
  Let
  \[
    V_+ = \bigoplus_{\beta\not\in \alpha+\Lambda} V_{\beta}.
  \]
  Note that the set $\{\beta\mid \beta\not \in \alpha + \Lambda\}$ is invariant
  under translation by $\Phi_{P}$. Hence $V_+$ is $P$-invariant. Consider the
  $P$-representation $V_P = V/V_+$. Since $x_\alpha\neq 0$, the image of $x$ in
  $V_P$ is nonzero.
  Hence under the quotient map
  \[
    \mathscr Q\times_{P} V \longrightarrow \mathscr Q\times_{P} V_P
  \]
  $\sigma$ is mapped to a nontrivial section of
  the vector bundle $\mathscr Q\times_{P} V_P$.

  Take a filtration $F_iV_{P}$ on $V_P$ by $P$-invariant subspaces such that the
  unipotent radical of $P$ acts trivially on the graded pieces. Thus the
  graded pieces are $L$-representations. This
  induces a filtration on $\mathscr Q \times_{P} V_P$. Since $\mathscr Q
  \times_{P} V_P$ has a nontrivial section, at least one of
  the graded pieces admits a nontrivial section.

  Say $\mathscr Q\times_{P}\mathrm{gr}_i V_P$ admits a nontrivial section. We
  further decompose $\mathrm{gr}_i V_P = \bigoplus_{j} V_{L,j}$ as a direct sum of
  irreducible $L$-representations. Then for at least one $j$, the vector bundle
  $\mathscr Q\times_{P} V_{L,j} = \mathscr Q\times_{P} L \times_{L} V_{L,j}$
  admits a nontrivial section.
  Since $\mathscr Q$ is a canonical reduction of $\mathscr Q\times_{P}\Gamma$,
  the principal $L$-bundle $\mathscr Q\times_{P} L$ is semistable.
  Applying Lemma~\ref{lem:lower-bound-det-with-section},
  we conclude that
  \[
    \textstyle
    \deg (\mathscr Q\times_{P} \bigwedge^{\mathrm{top}} V_{L,j})
    \geq 0.
  \]

  Let $\alpha^\prime \in \widehat{P}$ be such that $\bigwedge^{\mathrm{top}}
  V_{L,j} \cong \mathbb C_{\alpha^\prime}$ as $P$-representations. To show that
  this is the desired $\alpha^\prime$, it remains to show that $\alpha^\prime \in
  \mathbb Z_{\geq 1} \alpha + \Lambda$. Indeed, by construction, the
  weights of $V_{L,j}$ are contained in $\alpha+\Lambda$, since it is a
  subquotient of $V_P$. Since $\alpha^\prime$ is the sum of the weights
  in $V_{L,j}$, we have $\alpha^\prime \in \mathbb Z_{\geq 1} \alpha +
  \Lambda$ as desired. From this construction it is clear that $\alpha^\prime$
  is taken from a fixed finite set.
\end{proof}

For any $\mathscr Q, \sigma$ in Lemma~\ref{lem:boundedness-after-reduction-2},
let $K_{\mathscr Q, \sigma} \subset \widehat{P}_{\mathbb R}$ be the set consisting of
\begin{enumerate}
\item
  all the characters in Lemma~\ref{lem:convex-hull-char-of-gamma};
\item
  the orthogonal projection of each simple root onto $\Delta_{L}^\perp$;
\item
  all the $\alpha^\prime$ (for varying $x$ and $\alpha$) from
  Lemma~\ref{lem:positivity-of-quotient}, taken from a fixed finite set.
\end{enumerate}
As
$\mathscr C, \mathscr Q, \sigma$ vary, there are only finitely many
such distinct $K_{\mathscr Q, \sigma} \subset \widehat{P}_{\mathbb R}$.
To prove Lemma~\ref{lem:boundedness-after-reduction-2}, we define the desired $\Omega$
to be the finite union
\[
  \Omega := \bigcup_{\mathscr Q, \sigma}
  \{d\in \mathrm{Hom}(\widehat{P}, \mathbb Z) \mid d(\chi) \geq \mathcal B,\
  \forall \chi \in K_{\mathscr Q, \sigma}\},
\]
where
$\mathcal B$ is taken to be the
minimum of $0$ and the lower bound in Lemma~\ref{lem:bounding-theta_i}.
Then by
Lemma~\ref{lem:bounding-theta_i},
Lemma~\ref{lem:positivity-projection-of-simple-root}, and
Lemma~\ref{lem:positivity-of-quotient}, it
is clear that for every $\mathscr Q$ in
Lemma~\ref{lem:boundedness-after-reduction-2}, $\deg_{\mathscr Q} \in \Omega$.
It remains to show that each
\[
  \Omega(K_{\mathscr Q, \sigma}, \mathcal B) :=
  \{d\in \mathrm{Hom}(\widehat{P}, \mathbb Z) \mid d(\chi) \geq \mathcal B,\
  \forall \chi \in K_{\mathscr Q, \sigma}\}
\]
is finite.
By Lemma~\ref{lem:finiteness-Omega-K-B}, to prove
Lemma~\ref{lem:boundedness-after-reduction-2} it suffices to show that
\begin{lemm}
  \label{lem:convex-hull-contains-zero-in-interior}
  For any given $\mathscr Q, \sigma$ in
  Lemma~\ref{lem:boundedness-after-reduction-2}, the convex hull
  of $K_{\mathscr Q,
    \sigma}$ in $\widehat{P}_{\mathbb R} = \widehat{L}_{\mathbb R}$ contains $0$
  in its interior.
\end{lemm}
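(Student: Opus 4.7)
The goal is to show that for every nonzero linear functional $\ell$ on $\widehat{L}_{\mathbb R}$ there exists $\kappa\in K_{\mathscr Q,\sigma}$ with $\ell(\kappa)>0$, which is equivalent to the claim. Decompose $\ell = \ell_1+\ell_2$ according to the splitting $\widehat{L}_{\mathbb R} = \widehat{\Gamma}_{\mathbb R}\oplus \Delta_L^\perp$ of~\eqref{eq:char-L}. If $\ell_1\neq 0$, every character of part~(1) in $K_{\mathscr Q,\sigma}$ lies in $\widehat{\Gamma}_{\mathbb R}$, so Lemma~\ref{lem:convex-hull-char-of-gamma} immediately supplies one pairing positively with $\ell_1$, hence with $\ell$.

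Assume from now on $\ell_1=0$ and $\ell_2\neq 0$. Extend the Killing form to $\widehat{T}_{\mathbb R}$ by zero on $\widehat{\Gamma}_{\mathbb R}$; its restriction to $\mathbb R\Phi$ is positive definite, so there is a unique $u\in \Delta_L^\perp\setminus\{0\}$ with $\ell(v)=(u,v)$ for every $v\in \widehat L_{\mathbb R}$. For each simple root $\alpha\in\Delta$ one has $\ell(\pi(\alpha))=(u,\alpha)$, since $\alpha-\pi(\alpha)\in\mathrm{span}(\Delta_L)$ and $u\perp\Delta_L$. If some $(u,\alpha)>0$, part~(2) of $K_{\mathscr Q,\sigma}$ finishes this case, so we may assume $(u,\alpha)\leq 0$ for every $\alpha\in\Delta$.

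Under this antidominance assumption the GIT hypothesis enters. Let $u^\vee\in\mathrm{Hom}(\mathbb G_m,T)_{\mathbb R}$ be the Killing-form dual of $u$. Since $\epsilon,\vartheta\in\widehat{\Gamma}_{\mathbb R}$ are Killing-orthogonal to $u\in\mathbb R\Phi$, one has $\epsilon(u^\vee)=0$, so $u^\vee$ is a rational $1$-parameter subgroup of $G$, and $\theta(u^\vee)=\vartheta(u^\vee)=0$. Choose $x\in V^{\mathrm s}(\theta)$ in the image of a local lift of $\sigma$, as permitted by the third bullet of Lemma~\ref{lem:boundedness-after-reduction-2}. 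Were $(u,\alpha)\leq 0$ for every weight $\alpha$ of $x$, the limit $\lim_{t\to 0}(-u^\vee)(t)\cdot x$ would exist in $V$ and the Hilbert--Mumford criterion applied to the non-trivial $1$-PS $-u^\vee$ would force the strict inequality of $\theta$-stability to fail, because $\theta(-u^\vee)=0$. Hence some weight $\alpha$ of $x$ satisfies $(u,\alpha)>0$. Apply Lemma~\ref{lem:positivity-of-quotient} to this pair $(x,\alpha)$ to produce $\alpha^\prime=n\alpha+\sum_i m_i\beta_i+\delta\in K_{\mathscr Q,\sigma}$ with $n\geq 1$, $m_i\geq 0$, $\beta_i\in\Phi^-$, and $\delta\in\mathbb Z\Delta_L$. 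Then $(u,\delta)=0$ and each $\beta_i=-\gamma_i$ for a positive root $\gamma_i$ that is a nonnegative integer combination of simple roots, so $(u,\beta_i)\geq 0$; therefore
\[
  \ell(\alpha^\prime)=(u,\alpha^\prime)\geq n(u,\alpha)>0.
\]

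The heart of the argument is that the reduction $\ell_1=0$ is exactly what forces $u^\vee$ to be a $1$-PS of $G$ annihilated by $\theta$; then $\theta$-stability can only be preserved if some weight of $x$ escapes the antidominant half-space of $u$, and this weight-level positivity is upgraded to positivity of $\ell(\alpha^\prime)$ by the positivity built into Lemma~\ref{lem:positivity-of-quotient}. The main technical care required lies in keeping the Hilbert--Mumford sign convention consistent and in tracking the orthogonality relations among $u$, $\Delta_L$, $\epsilon$, and $\vartheta$.
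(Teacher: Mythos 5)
Your proof is correct and takes essentially the same approach as the paper: you reduce the claim to showing that for each nonzero $\ell$ some element of $K_{\mathscr Q,\sigma}$ pairs strictly positively with $\ell$, split $\ell$ along the decomposition $\widehat{L}_{\mathbb R}=\widehat\Gamma_{\mathbb R}\oplus\Delta_L^\perp$, dispose of the $\widehat\Gamma_{\mathbb R}$-component via Lemma~\ref{lem:convex-hull-char-of-gamma}, use the projected simple roots to reduce to the antidominant case, and then combine Hilbert--Mumford with Lemma~\ref{lem:positivity-of-quotient}. The only difference from the paper's write-up is cosmetic: the paper argues by contradiction (assume $\varphi(K)\subset\mathbb R_{\geq 0}$), first uses Lemma~\ref{lem:positivity-of-quotient} to deduce $\varphi(\alpha)\geq 0$ on all weights of $x$ and then invokes Hilbert--Mumford, whereas you first invoke Hilbert--Mumford to produce a weight $\alpha$ of $x$ with $(u,\alpha)>0$ and then feed it through Lemma~\ref{lem:positivity-of-quotient}; the two orderings are equivalent.
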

\begin{proof}
  We will prove it by showing that $K_{\mathscr Q,
    \sigma}$ cannot be contained in any half space.
  Namely, suppose we have a nonzero a linear functional
  \[
    \varphi: \widehat{P}_{\mathbb R} = \widehat{L}_{\mathbb R} \longrightarrow \mathbb R \quad
    \text{such that  }
    \varphi(K( \mathscr Q, \sigma)) \subset \mathbb R_{\geq 0},
  \]
  we would like to get a contradiction.

  Given such $\varphi$, using \eqref{eq:char-L}, we extend
  it to a functional on $\widehat{T}_{\mathbb R}$, which we still call $\varphi$, by setting
  $\varphi|_{\Delta_{L}} = 0$. By Lemma~\ref{lem:convex-hull-char-of-gamma},
  $\varphi|_{\widehat{\Gamma}_{\mathbb R}} = 0$. For any simple root
  $\alpha \in \Delta$,
  we have $\varphi(\alpha)  = \varphi(\pi(\alpha))\geq 0$, since
  $\pi(\alpha)$ is in $K( \mathscr Q, \sigma)$.
  Hence
  $\varphi(\Lambda)
  \subset \mathbb R_{\leq 0} $, where $\Lambda$ is
  defined in Lemma~\ref{lem:positivity-of-quotient}.
  Thus, for any $\alpha$ in Lemma~\ref{lem:positivity-of-quotient}, we must
  have $\varphi(\alpha) \geq 0$. Since there are only finitely many $\alpha$,
  if $\varphi \neq 0$, we can replace $\varphi$ by a linear functional defined
  over $\mathbb Q$ with the same properties.
  Thus there is a $1$-parameter subgroups
  \[
    \lambda: \mathbb G_m \longrightarrow  T,
  \]
  such that
  $\langle \lambda ,\alpha \rangle \geq 0$ whenever $x_{\alpha} \neq 0$, and
  $\langle \lambda ,\beta \rangle = 0$ for any $\beta \in \widehat\Gamma$.
  In particular, it violates the Hilbert-Mumford
  criterion \cite[Proposition~2.5]{king1994moduli} and thus no such $x$ can be
  stable.
  This contradicts to the last assumption in
  Lemma~\ref{lem:boundedness-after-reduction-2}.
\end{proof}

\iffalse
\bibliography{references.bib}
\fi
%%% Local Variables:
%%% mode: latex
%%% TeX-master: "boundedness-paper"
%%% End:

% LocalWords:  semistable

% Boundendess4 is supposed to be the final version, with unless comments
% removed, and many common senses about algebraic groups also removed.

\bibliographystyle{alpha}
\bibliography{references.bib}

\end{document}